\RequirePackage[l2tabu, orthodox]{nag}
\documentclass[11pt]{article}%

%all packages listed below

%For mathematical symbols
\usepackage{amssymb}
\usepackage{amsfonts}
\usepackage{amsmath}
\usepackage{dsfont}

%For theorems and related
\usepackage{amsthm}
\usepackage{hyperref}
\usepackage{cleveref}

%For inclusion of fiigures
\usepackage{float}
\usepackage{graphicx}
\usepackage{caption}

%For diagrams
\usepackage[all]{xy}
\usepackage{tikz}

%For url
\usepackage{url}

%For reference

%For typsetting
\usepackage{color}
\usepackage{enumerate}
\setcounter{MaxMatrixCols}{30}%

\usepackage[titletoc, title]{appendix}

%Formatting
\usepackage[latin1]{inputenc}
\usepackage[a4paper]{geometry}
\usepackage{microtype}

\textwidth 16.5cm
\evensidemargin -5mm
\oddsidemargin -5mm
%\abovedisplayskip 3mm
%\belowdisplayskip 3mm
%\abovedisplayshortskip 0mm
%\belowdisplayshortskip 2mm

%-------------------theorems related ----------------------------

\newtheorem{theo}{Theorem}[section]

\newtheorem{lem}[theo]{Lemma}
\newtheorem{cor}[theo]{Corollary}

\numberwithin{equation}{section}

%-------------------characters related ------------------------------

\newcommand{\EE}{\mathbb{E}}

\newcommand{\NN}{\mathbb{N}}

\newcommand{\PP}{\mathbb{P}}

\newcommand{\RR}{\mathbb{R}}

%\newcommand{\Fa}{ {\mathcal F }}

%-------------typesetting---------------------

%------------avoiding italic-----------------

\newcommand{\Var}{\mathrm{Var}}
\newcommand{\red}[1]{{\color{red}#1}}
\newcommand{\hide}[1]{}

\title{Pre-freezing transition in Boltzmann-Gibbs measures \\ associated with log-correlated fields}
\author{Mo Dick Wong \\
Statistical Laboratory, University of Cambridge
}

\date{\today}
\begin{document}

\maketitle

\abstract{We consider Boltzmann-Gibbs measures associated with log-correlated Gaussian fields as potentials and study their multifractal properties which exhibit phase transitions. In particular, the pre-freezing and freezing phenomena of the annealed exponent, predicted by Fyodorov using a modified replica-symmetry-breaking ansatz, are generalised to arbitrary dimension and verified using results from Gaussian multiplicative chaos theory.}
%\setcounter{tocdepth}{2}
%\tableofcontents

\section{Introduction}

Let $\{X_\epsilon(\cdot)\}_{\epsilon \in (0,1]}$ be a collection of centred Gaussian fields defined on a domain $D \subset \RR^d$ containing $[-2,2]^d$,
with covariance kernels given by
\begin{equation}\label{eq:LGF}
K_\epsilon(x, y) := \EE[X_\epsilon(x) X_\epsilon(y)] = -\log \left(|x-y| + \epsilon\right) + g_\epsilon(x, y), \qquad x, y \in D
\end{equation}

\noindent where $g_\epsilon$ are continuous functions (in variables $x$ and $y$) that are uniformly bounded, in the sense that there exists a constant $C_g > 0$ independent of $\epsilon$ such that
\begin{align*}
\sup_{x, y \in D} |g_\epsilon(x,y)| \le C_g.
\end{align*}

\noindent Such collection of random fields, for instance, arises from the mollification of log-correlated fields: if $X$ is a centred Gaussian field such that $\EE[X(x) X(y)] = -\log |x-y| + g(x, y)$ for some bounded continuous function $g$, and $\varphi_\epsilon := \epsilon^{-d} \varphi(\cdot/\epsilon)$ where $\varphi$ is a mollifier, then $X \ast \varphi_\epsilon$ is a centred Gaussian field with covariance structure of the form \eqref{eq:LGF}.

For each $\epsilon > 0$, the continuity of $K_\epsilon$ suggests that $X_\epsilon$ may be defined pointwise, and we may study the behaviour of its exponentiation $e^{\beta X_\epsilon(\cdot)}$ as $\epsilon$ tends to $0$. Given that $\Var(X_\epsilon(\cdot))$ is blowing up everywhere, this problem is not well-posed unless we introduce some suitable normalisation. One possibility is to normalise $e^{\beta X_\epsilon(\cdot)}$ by its expectation, i.e. to consider the sequence of random measures
\begin{align} \label{eq:GMC}
M_{\beta, \epsilon}(dx) = e^{\beta X_\epsilon(x) - \frac{\beta^2}{2} \Var(X_\epsilon(x))}dx.
\end{align}

\noindent The limit object as $\epsilon$ goes to $0$ is called the Gaussian multiplicative chaos, which was first constructed in the 1980's by Kahane \cite{kahane} who would like to provide a mathematically rigorous framework for Kolmogorov's turbulence model for energy dissipation \cite{M1972}. The theory of multiplicative chaos has seen a lot of development in the past few years due to the interest from mathematical physics. Indeed, an important example of log-correlated Gaussian field is the Gaussian free field for $d=2$, and the associated chaos measure, known as the Liouville quantum gravity measure, plays a central role in random geometry alongside other probabilistic objects such as Schramm-Loewner evolution \cite{DS, BNotes}. More recently the theory has also shown up in other areas, for instance in random matrix theory where the characteristic polynomials of a large class of random matrices behave asymptotically like a multiplicative chaos \cite{CUE, Hermitian, LOS}, and in probabilistic number theory where the statistical behaviour of the Riemann zeta function on the critical line is found to be closely related to (complex) multiplicative chaos \cite{SW}. We refer the readers to \cite{RV2014} for a detailed review of the theory.

In this paper we shall adopt a different normalisation, and consider the Boltzmann-Gibbs distribution with potential $X_\epsilon$ on $[-1, 1]^d$, i.e.
\begin{align}\label{eq:Gibbs}
\mu_{\beta, \epsilon}(dx)
:= m_{\beta, \epsilon}(x)dx := \frac{1}{Z_{\epsilon}(\beta)}e^{\beta X_\epsilon(x)}dx,
\qquad Z_\epsilon(\beta):= \int_{[-1,1]^d} m_{\beta, \epsilon}(x)dx.
\end{align}

\noindent This is closely related to the previous normalisation as one can imagine that the properties of $\mu_{\beta, \epsilon}(dx)$ are similar, if not identical, to that of $\frac{M_{\beta, \epsilon}(dx)}{M_{\beta, \epsilon}([-1, 1]^d)}$. Unlike $M_{\beta, \epsilon}$ the limit of which is almost surely trivial when $\beta^2 \ge 2d$ from the standard theory of multiplicative chaos, $\{\mu_{\beta, \epsilon}\}_\epsilon$ are probability measures defined on a single compact set, and hence a non-trivial (distributional) limit of $\mu_{\beta, \epsilon}$ as $\epsilon \to 0$ (possibly along a subsequence) exists by Prokhorov's theorem.  

The study of \eqref{eq:Gibbs} is motivated by the interest from the physics community in understanding disorder-induced multifractality. The special case where $d = 2$ is of particular importance not only because of its connection to Liouville quantum gravity \cite{KMT1996} as we described earlier, but also other physical problems such as Dirac particles in a random magnetic field \cite{CMW1996}, see \cite{paper} for more examples and physical discussions.

\subsection{Main result}
The focus of this paper is the multifractal nature of the limiting Boltzmann-Gibbs measure, which is captured, for $q>1$, by the moments
\begin{align*}
\int_{[-1,1]^d} m_{\beta, \epsilon}(x)^q dx = \frac{Z_\epsilon(q\beta)}{Z_\epsilon(\beta)^q} &\underset{\epsilon \to 0}{\sim} \epsilon^{\eta_q},\\
\EE\left[\int_{[-1,1]^d} m_{\beta, \epsilon}(x)^q dx\right] = \EE\left[\frac{Z_\epsilon(q\beta)}{Z_\epsilon(\beta)^q}\right] &\underset{\epsilon \to 0}{\sim} \epsilon^{\tilde{\eta}_q}.
\end{align*}

\noindent We call $\eta_q, \tilde{\eta}_q$ the quenched and annealed multifractal exponents respectively. It is known that the quenched multifractal exponent $\eta_q$ exhibits the following phase transition:
\begin{align} \label{eq:eta_q}
\eta_q = 
\begin{cases} 
-\frac{\beta^2 q^2}{2} + \frac{\beta^2 q}{2}, & 0 \le \beta^2 \le \frac{2d}{q^2}, \\
d - \sqrt{2d} |\beta| q + \frac{\beta^2 q}{2}, & \frac{2d}{q^2} \le \beta^2 \le 2d, \\
-d(q-1), & \beta^2 \ge 2d. 
\end{cases}
\end{align}

\noindent The observation that $\eta_q$ attains a $\beta$-independent value in the low temperature regime $\beta^2 \ge 2d$ is known as \textbf{freezing} in the physics literature and is related to the fact that the measure becomes localised (see e.g. \cite{AZ2014, AZ2015} for more precise statements, and also our discussion in Section \ref{sec:ansatz}). We note that \eqref{eq:eta_q} is hardly surprising, at least at the heuristic level, given the properties of Gaussian multiplicative chaos and log-correlated Gaussian fields: 
\begin{itemize}
\item When $\beta^2 < \frac{2d}{q^2}$, we may rewrite the $q$-th moment, using the fact that $e^{\gamma \Var(X_\epsilon(x))} \asymp \epsilon^{-\gamma}$ for any $\gamma \in \RR$ and $x \in [-1, 1]^d$, as
\begin{align*}
\frac{Z_\epsilon(q\beta)}{Z_\epsilon(\beta)^q} \asymp \epsilon^{-\frac{\beta^2q^2}{2} + \frac{\beta^2 q}{2}} \frac{M_{\beta q, \epsilon} ([-1, 1]^d)}{M_{\beta, \epsilon}([-1, 1]^d)^q}.
\end{align*}

Suppose the fields $\{X_\epsilon(\cdot)\}_\epsilon$ are obtained from mollification. It is known that whenever $\gamma^2 < 2d$, i.e. within the subcritical regime of Gaussian multiplicative chaos, the sequence $M_{\gamma, \epsilon}(dx)$ defined in \eqref{eq:GMC} converges as $\epsilon \to 0$ to a non-trivial random measure $M_{\gamma}(dx)$ which is supported on the so-called $\gamma$-thick points of the field \cite{B2017}, i.e. points $x$ such that
\begin{align*}
\lim_{\epsilon \to 0} \frac{X_\epsilon(x)}{- \log \epsilon} = \gamma,
\end{align*} 

\noindent and the limiting measure satisfies $\EE[M_{\gamma}(A)] = |A|$ where $|A|$ is the Lebesgue measure of a Borel set $A$. Therefore we should expect that $M_{\beta q, \epsilon}([-1, 1]^d)$ and $M_{\beta, \epsilon}([-1,1]^d)$ to remain of order one as $\epsilon$ approaches zero, and the order of the $q$-th moment is precisely given by the simple scaling $\epsilon^{-\frac{\beta^2q^2}{2} + \frac{\beta^2 q}{2}}$.

\item When $\frac{2d}{q^2} \le \beta^2 < 2d$, the above argument does not apply anymore because $M_{\beta q, \epsilon}$ converges to a trivial limit as $\epsilon \to 0$. Therefore the correct way to view the $q$-th moment would be
\begin{align*}
\frac{Z_\epsilon(q\beta)}{Z_\epsilon(\beta)^q} \asymp \epsilon^{ \frac{\beta^2 q}{2}} \frac{\int_{[-1, 1]^d} e^{\beta q X_\epsilon(x)}dx}{M_{\beta, \epsilon}([-1, 1]^d)^q}.
\end{align*}

\noindent It is reasonable to expect that the size of the numerator is similar, at least in exponential scale, to that of the discrete sum $\epsilon^d \sum_{i} e^{\beta q X_\epsilon(r_i)}$ with $\{r_i\}$ being $\epsilon^{-d}$ equally-spaced points in $[-1,1]^d$. Note that for any $c >0$, the probability that $X_\epsilon(x)$ is at least $-c \log \epsilon$ is of order $\epsilon^{\frac{1}{2}c^2}$, and hence the discrete sum should be of order at least
\begin{align*}
\epsilon^{\frac{1}{2}c^2} e^{|\beta| q (-c \log \epsilon)} = \epsilon^{\frac{1}{2}(c - |\beta| q)^2 - \frac{\beta^2q^2}{2}}.
\end{align*}

\noindent The idea now is to make $(c - |\beta|q)^2$ small so that the bound is as tight as possible. It is, however, known that the ``maxima" of the field $X_\epsilon(\cdot)$ are of size $-\sqrt{2d} \log \epsilon$ as $\epsilon \to 0$ (see \cite{max} for more details), and hence the best one can achieve is $c = \sqrt{2d}$, which yields the desired order of the $q$-th moment.

\item When $\beta^2 \ge 2d$, one may understand the $q$-th moment directly by discretising the integrals in the numerator and the denominator. Our knowledge about the maxima of $X_\epsilon(\cdot)$, as explained above, will then lead to $\eta_q = -d(q-1)$.
\end{itemize}

\noindent Here we would like to study instead the annealed exponent $\tilde{\eta}_q$, which can behave rather differently from $\eta_q$ due to the fact that the expectation may be dominated by contributions from rare events. This is a reformulation of the problem considered in \cite{paper} where a precise prediction was stated for $\tilde{\eta}_q$ for $d=1$. Our goal is to validate the prediction and extend it to arbitrary dimension. As discussed below, there was in fact a minor inconsistency in the reported value of the exponent in the conjecture.

\begin{theo}\label{theo:main}
For $q > 1$,
\begin{align*}
\tilde{\eta}_q : = \lim_{\epsilon \to 0} \frac{\log \EE\left[ \frac{Z_\epsilon(q\beta)}{Z_\epsilon(\beta)^q}\right] }{\log \epsilon}
\end{align*}

\noindent exists and is given by
\begin{align} \label{eq:teta_q}
\tilde{\eta}_q = \begin{cases}
-\frac{\beta^2 q^2}{2} + \frac{\beta^2 q}{2}, & 0 \le \beta^2 < \frac{2d}{2q-1},\\
\frac{(2d - \beta^2)^2}{8\beta^2} - d(q-1), & \frac{2d}{2q-1} \le \beta^2 < 2d,\\
-d(q-1), & \beta^2 \ge 2d.
\end{cases}
\end{align}

\end{theo}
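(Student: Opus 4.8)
The plan is to reduce the computation, via a Girsanov change of measure, to the small‑value (negative moment) asymptotics of a Gaussian multiplicative chaos measure integrated against a singular kernel, and then to treat the three regimes by separate mechanisms; by the symmetry $X_\epsilon\leftrightarrow -X_\epsilon$ we may assume $\beta>0$ throughout. Writing $\EE[Z_\epsilon(q\beta)/Z_\epsilon(\beta)^q]=\int_{[-1,1]^d}\EE[e^{q\beta X_\epsilon(x)}Z_\epsilon(\beta)^{-q}]\,dx$ and tilting the law of $X_\epsilon$ by $e^{q\beta X_\epsilon(x)}$, which shifts its mean by $q\beta K_\epsilon(x,\cdot)$, one gets
\[
\EE\left[\frac{Z_\epsilon(q\beta)}{Z_\epsilon(\beta)^q}\right]=\int_{[-1,1]^d}\EE\left[e^{q\beta X_\epsilon(x)}\right]\EE\left[W_\epsilon(x)^{-q}\right]dx,\qquad W_\epsilon(x):=\int_{[-1,1]^d}e^{\beta X_\epsilon(y)+q\beta^2K_\epsilon(x,y)}\,dy.
\]
Since $\EE[e^{q\beta X_\epsilon(x)}]=e^{\frac{q^2\beta^2}{2}K_\epsilon(x,x)}\asymp\epsilon^{-q^2\beta^2/2}$ and, pulling out the chaos normalisation, $W_\epsilon(x)=\epsilon^{-\beta^2/2}\int_{[-1,1]^d}(|x-y|+\epsilon)^{-q\beta^2}\rho_\epsilon(x,y)\,M_{\beta,\epsilon}(dy)$ with $\rho_\epsilon$ bounded above and below uniformly, the statement is equivalent to showing $\theta:=\lim_{\epsilon\to0}\frac{\log\EE[I_\epsilon^{-q}]}{\log\epsilon}$ exists, where $I_\epsilon:=\int_{[-1,1]^d}(|y|+\epsilon)^{-q\beta^2}M_{\beta,\epsilon}(dy)$, and then reading off $\tilde\eta_q=-\frac{q^2\beta^2}{2}+\frac{q\beta^2}{2}+\theta$; one checks that the $x$‑integral acquires no spurious boundary contribution, the exponent of $\EE[I_\epsilon(x)^{-q}]$ being uniform over $x\in[-1,1]^d$.

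In the first regime, $\beta^2<\frac{2d}{2q-1}$ is equivalent to $q\beta^2<d+\frac{\beta^2}{2}$, which is exactly the threshold below which $\int_{[-1,1]^d}|y|^{-s}M_\beta(dy)$ is a.s.\ finite; hence $I_\epsilon\to\int_{[-1,1]^d}|y|^{-q\beta^2}M_\beta(dy)\in(0,\infty)$ a.s., and together with the uniform finiteness of negative moments of the chaos mass of a fixed annulus around the origin (making $I_\epsilon^{-q}$ uniformly integrable) this gives $\theta=0$ and $\tilde\eta_q=-\frac{\beta^2q^2}{2}+\frac{\beta^2q}{2}$. In the second regime, $\frac{2d}{2q-1}\le\beta^2<2d$, the integral $I_\epsilon$ is typically of polynomial order $\epsilon^{-a}$ with $a:=(q-\tfrac12)\beta^2-d>0$, but $\EE[I_\epsilon^{-q}]$ is governed by the polynomially costly scenario in which $I_\epsilon$ stays of order one. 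For the lower bound $\EE[I_\epsilon^{-q}]\gtrsim\epsilon^{\theta+o(1)}$ I apply a Cameron--Martin shift of $X_\epsilon$ by $-cK_\epsilon(0,\cdot)$ with $c$ slightly above $(q-\tfrac12)\beta-\tfrac d\beta>0$: this costs $\epsilon^{c^2/2}$ in probability, changes the kernel exponent in $I_\epsilon$ to $q\beta^2-\beta c<d+\frac{\beta^2}{2}$, and so returns $I_\epsilon$ to order one, yielding $\theta\le\tfrac12(\tfrac d\beta-\tfrac{(2q-1)\beta}{2})^2$; a direct computation gives $-\frac{q^2\beta^2}{2}+\frac{q\beta^2}{2}+\tfrac12(\tfrac d\beta-\tfrac{(2q-1)\beta}{2})^2=\frac{(2d-\beta^2)^2}{8\beta^2}-d(q-1)$. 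For the matching upper bound $\EE[I_\epsilon^{-q}]\lesssim\epsilon^{\theta-o(1)}$ I decompose into dyadic annuli $A_j=\{2^{-j-1}<|y|\le2^{-j}\}$, so that $I_\epsilon\gtrsim\max_{0\le j\le\log_2(1/\epsilon)}2^{jq\beta^2}M_{\beta,\epsilon}(A_j)$, and exploit the branching‑random‑walk structure of the chaos: comparing with a scale‑decomposable field one writes $M_{\beta,\epsilon}(A_j)\asymp e^{\beta S_j}2^{-j(\beta^2/2+d)}\Xi_j$, with $S_j$ the field coarse‑grained at scale $2^{-j}$ (a Gaussian random walk with increment variance $\log2$) and $\Xi_j$ a unit‑scale sub‑critical chaos mass whose positive and negative moments are bounded uniformly in $j,\epsilon$; keeping the maximum of order one forces the walk $\beta S_j+(a\log2)j$ below zero for every $j$, an event of probability $\epsilon^{\theta+o(1)}$ by Cram\'er's theorem, and a layer‑cake decomposition over the possible sizes of $I_\epsilon$ (the tail of each chaos factor being sub‑polynomial) completes the bound.

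In the third regime, $\beta^2\ge2d$, the chaos at inverse temperature $\beta$ is critical or super‑critical and the behaviour is frozen and driven by the maximum of $X_\epsilon$. The inequality $\tilde\eta_q\le-d(q-1)$ is immediate from Jensen's inequality, which gives $\tilde\eta_q\le\eta_q$, together with the known value \eqref{eq:eta_q}. For the reverse inequality I use $\int_{[-1,1]^d}m_{\beta,\epsilon}^q\le\|m_{\beta,\epsilon}\|_\infty^{q-1}$ (as $m_{\beta,\epsilon}$ is a probability density), $\|m_{\beta,\epsilon}\|_\infty=e^{\beta\max X_\epsilon}/Z_\epsilon(\beta)$, and $Z_\epsilon(\beta)\ge\int_{B(x^\ast,\epsilon)\cap[-1,1]^d}e^{\beta X_\epsilon}=e^{\beta\max X_\epsilon}\epsilon^d S_\epsilon$, where $x^\ast$ is a maximiser and $S_\epsilon:=\epsilon^{-d}\int_{B(x^\ast,\epsilon)\cap[-1,1]^d}e^{\beta(X_\epsilon(y)-\max X_\epsilon)}\,dy$; hence $\frac{Z_\epsilon(q\beta)}{Z_\epsilon(\beta)^q}\le\epsilon^{-d(q-1)}S_\epsilon^{-(q-1)}$, and since $\sup_\epsilon\EE[S_\epsilon^{-(q-1)}]<\infty$ by the uniform control of the oscillation of $X_\epsilon$ below its maximum on scale $\epsilon$ (cf.\ \cite{max}), we get $\tilde\eta_q\ge-d(q-1)$, so $\tilde\eta_q=-d(q-1)$, consistently with the value at $\beta^2=2d$ from the second regime.

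The hard part is the matching upper bound in the second regime: one must show that no field configuration is cheaper than the uniform Cameron--Martin shift for preventing $I_\epsilon$ from growing, i.e.\ control the joint lower deviations of the chaos masses of all dyadic annuli around the origin simultaneously via the branching‑random‑walk heuristic above (the comparison with a scale‑decomposable field and the uniform moment bounds on the $\Xi_j$ being where the bookkeeping is heaviest); the second most delicate point is the uniform negative‑moment bound on the shape factor $S_\epsilon$ near the maximum of $X_\epsilon$ in the third regime.
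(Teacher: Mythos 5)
Your overall skeleton — tilt by $e^{q\beta X_\epsilon(x)}$, absorb the shifted covariance into a singular kernel, and reduce everything to the negative moment $\EE[I_\epsilon^{-q}]$ with $I_\epsilon=\int(|y|+\epsilon)^{-q\beta^2}M_{\beta,\epsilon}(dy)$ — is exactly the paper's Lemma \ref{lem:Girsanov} with $c=1$, and your arithmetic (the threshold $q\beta^2<d+\tfrac{\beta^2}{2}$, the optimal shift $c=(q-\tfrac12)\beta-\tfrac d\beta$, and the identity $-\tfrac{q^2\beta^2}{2}+\tfrac{q\beta^2}{2}+\tfrac{c^2}{2}=\tfrac{(2d-\beta^2)^2}{8\beta^2}-d(q-1)$) checks out. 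The genuine gap is the matching \emph{upper} bound on $\EE[I_\epsilon^{-q}]$ in the intermediate regime (i.e.\ the lower bound on $\tilde\eta_q$), which you yourself flag as the hard part but only sketch. Your proposed route — an exact multiscale decomposition $M_{\beta,\epsilon}(A_j)\asymp e^{\beta S_j}2^{-j(\beta^2/2+d)}\Xi_j$, a Cram\'er/first-passage estimate for the walk staying below a line at every scale, and a layer-cake over the left tails of the $\Xi_j$ — is not an argument yet: the scale decomposition with independent, uniformly controllable $\Xi_j$ is only available for exactly scale-invariant kernels (the paper allows arbitrary bounded $g_\epsilon$), the $\Xi_j$ are not independent of the $S_j$ nor of each other, and the $-q$-th moment is dominated by the joint \emph{lower} deviations of all annuli simultaneously, which the ballot-problem heuristic does not by itself control. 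The paper avoids all of this with a much simpler device (Lemma \ref{lem:r2l}): after tilting with the optimal $c$, it bounds the denominator from below by the contribution of the single $\epsilon$-box containing the singularity, $\int\frac{e^{\beta X_\epsilon(x)-\cdots}}{(|x-u|+\epsilon)^{\beta^2qc}}dx\gtrsim\epsilon^{d-\beta^2qc+\frac{\beta^2}{2}}e^{\beta X_\epsilon(u)}e^{-O(\mathrm{osc})}$, which cancels the numerator $e^{\beta q(1-c)X_\epsilon(u)}$ \emph{pointwise} on $\{X_\epsilon(u)>0\}$ up to an oscillation factor controlled by Lemma \ref{lem:2}, the complementary event being handled by negative moments. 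That single-box cancellation is the missing idea; without it (or a full execution of your multiscale programme) the pre-freezing exponent is not established.

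Three secondary points. First, in the high-temperature regime you invoke a.s.\ convergence $I_\epsilon\to\int|y|^{-q\beta^2}M_\beta(dy)$; for the general kernels \eqref{eq:LGF} no convergence of $M_{\beta,\epsilon}$ is guaranteed, so both the tightness of $I_\epsilon$ (needed for $\EE[I_\epsilon^{-q}]\gtrsim1$) and the uniform negative-moment bound must be obtained by Kahane comparison with a reference field, as in Lemmas \ref{lem:1} and \ref{lem:negm}. Second, in the low-temperature regime your claim $\sup_\epsilon\EE[S_\epsilon^{-(q-1)}]<\infty$ is too strong: $x^\ast$ is random, so you need the oscillation uniformly over all $\epsilon$-boxes, and Lemma \ref{lem:2} only gives $\EE[e^{c\,\sup_i\sup_{v\in B_i}|X_\epsilon(v)-X_\epsilon(r_i)|}]=O(\epsilon^{-\kappa})$ for every $\kappa>0$; this weaker bound still suffices for the exponent. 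Third, your upper bound $\tilde\eta_q\le\eta_q$ via Jensen leans on the quenched result \eqref{eq:eta_q} in a quantitative form ($L^1$ convergence of $\log(Z_\epsilon(q\beta)/Z_\epsilon(\beta)^q)/\log\epsilon$) that the paper only states heuristically and deliberately does not use; the paper instead gives a self-contained argument via reverse H\"older, Jensen and Lemma \ref{lem:1}(ii).
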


We note that our exponent in the intermediate regime $\frac{2d}{2q-1} \le \beta^2 < 2d$ is slightly different from that in the original prediction. When $d = 1$, our annealed exponent $\tilde{\eta}_q$ is related to $\tilde{\tau}_q$ in \cite{paper} by $\tilde{\eta}_q = \tilde{\tau}_q - (q-1)$ and that our $\beta^2$ is equal to $2 \gamma$ in the notation of that paper. The value of $\tilde{\tau}_q$ in the intermediate regime was incorrectly reported as $(1-\gamma)^2/2\gamma$ instead of $(1-\gamma)^2/4\gamma$ in the paper except equation (9). This seems to have been caused by a systematic misprint.

Theorem \ref{theo:main} suggests that $ \EE\left[ Z_\epsilon(q\beta)/Z_\epsilon(\beta)^q\right] = \epsilon^{\tilde{\eta}_q + o(1)}$, and it is natural to ask what subleading order terms are hidden in the $o(1)$ term. For this we would like to mention a recent paper \cite{CRSLD2017} where the authors, by considering a directed polymer model on Cayley trees, conjectured an intriguing second order phase transition in the subcritical regime $\beta^2 < 2d$, with the subleading order term depending on whether $\beta^2$ is smaller than, equal to or bigger than $\frac{2d}{2q-1}$ (see equation (20)). It is not clear if our method in the current paper can be adapted to the analysis of  second order behaviour, which will require more refined versions of Lemma \ref{lem:1} and Lemma \ref{lem:2} even if that is possible, and this new prediction will be the subject of a future investigation.

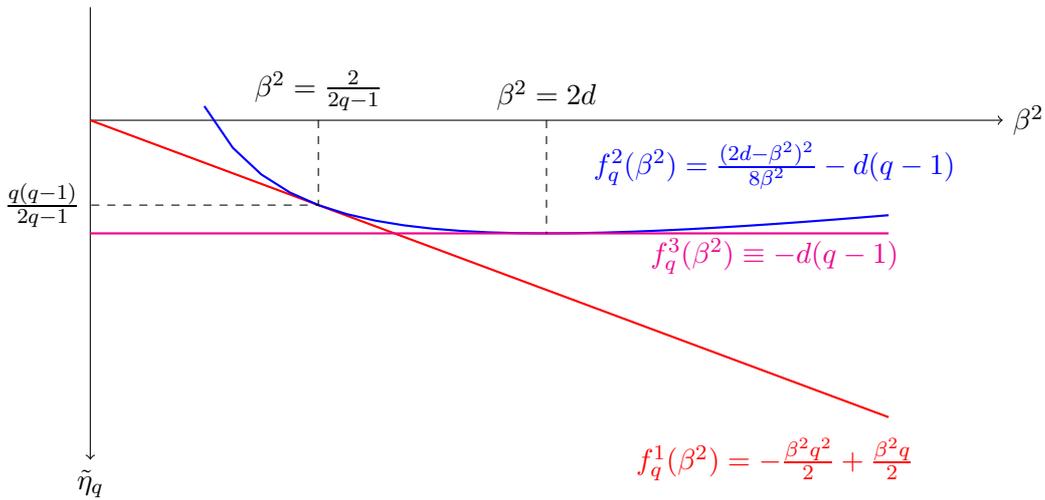
\begin{figure}[h!]
\begin{center}
\begin{tikzpicture}[scale=3]
\draw[->] (0,0) -- (4,0) node[right] {$\beta^2$};
\draw[->] (0,0.5) -- (0, -1.5) node[below] {$\tilde{\eta}_q$};
%implicitly assume q = 1.5
\draw[thick, red, domain=0:3.5] plot (\x, -0.375*\x); \node at (3, -1.5) {\color{red} $f^1_q(\beta^2) = -\frac{\beta^2 q^2}{2} + \frac{\beta^2 q}{2}$};
\draw[thick, blue, domain=0.5:3.5] plot (\x, {(2-\x)*(2-\x)/(8*\x) -0.5}); \node at (3, -0.2) {\color{blue} $f^2_q(\beta^2) = \frac{(2d-\beta^2)^2}{8\beta^2} - d(q-1)$};
\draw[thick, magenta, domain=0:3.5] plot(\x, -0.5); \node at (3, -0.6) {\color{magenta} $f^3_q(\beta^2) \equiv -d(q-1)$};
\draw[dashed] (1, 0) node[above] {$\beta^2 = \frac{2}{2q-1}$} -- (1, -0.375) -- (0, -0.375) node[left] {$\frac{q(q-1)}{2q-1}$};
\draw[dashed] (2, 0) node[above] {$\beta^2 = 2d$} -- (2, -0.5);
\end{tikzpicture}
\end{center}
\caption{\label{fig:phase} Behaviour of $\tilde{\eta}_q$ in three regimes.}
\end{figure}

\subsection{Relation of the main result to replica symmetry breaking ansatz} \label{sec:ansatz}

Comparing \eqref{eq:eta_q} and \eqref{eq:teta_q}, one can see that $\eta_q$ and $\tilde{\eta}_q$ behave quite differently in the subcritical regime $\beta^2 < 2d$ in two ways: the simple scaling of the annealed exponent $\tilde{\eta}_q = -\frac{\beta^2 q^2}{2} + \frac{\beta^2 q}{2}$ remains true up to  $\beta^2 = \frac{2d}{2q-1}$, beyond which an entirely different expression appears. For a physical interpretation of \eqref{eq:teta_q} (and \eqref{eq:eta_q} similarly), let us focus on the case where $q$ is a positive integer. Suppose we partition $[-1,1]^d$ into $\epsilon^{-d}$ boxes $\{B_i\}$ of width $\epsilon$, and let $x_i$ be the centre of the box $B_i$, then it is reasonable to expect that
\begin{align*}
\int_{[-1,1]^d} m_{\beta, \epsilon}^q(x) dx 
= \frac{Z_\epsilon(q\beta)}{Z_\epsilon(\beta)^q} 
= \frac{\int_{[-1,1]^d} e^{\beta q X_\epsilon(u)} du}{\left(\int_{[-1,1]^d} e^{\beta X_\epsilon(x)} dx\right)^q}
\approx \epsilon^{-d(q-1)} \sum_i \left(\frac{e^{\beta X_\epsilon(r_i)}}{\sum_j e^{\beta X_\epsilon(r_j)}}\right)^q.
\end{align*}

\noindent In other words the $q$-th moments are related to a discrete Boltzmann-Gibbs measure defined on the points $\{x_i\}$ with the same potential $X_\epsilon(\cdot)$. One may interpret $\sum_i \left(\frac{e^{\beta X_\epsilon(r_i)}}{\sum_j e^{\beta X_\epsilon(r_j)}}\right)^q$ as the probability of getting $q$ identical configurations when we draw $q$ independent samples from a discrete Boltzmann-Gibbs distribution, and Theorem \ref{theo:main} suggests that

\begin{align*}
\bar{p}_\epsilon(q) := \EE \left[\sum_i \left(\frac{e^{\beta X_\epsilon(r_i)}}{\sum_j e^{\beta X_\epsilon(r_j)}}\right)^q \right] \sim \epsilon^{\hat{\eta}_q}, 
\qquad \hat{\eta}_q
= \begin{cases}
-\frac{\beta^2 q^2}{2} + \frac{\beta^2 q}{2} + d(q-1), & 0 \le \beta^2 < \frac{2d}{2q-1},\\
\frac{(2d - \beta^2)^2}{8\beta^2}, & \frac{2d}{2q-1} \le \beta^2 < 2d,\\
0 , & \beta^2 \ge 2d.
\end{cases}
\end{align*}

\noindent In the low temperature regime $\beta^2 \ge 2d$, we see that the Boltzmann-Gibbs measure is so localised that the probability of having $q$ identical configurations is non-trivial\footnote{This does not really follow from our theorem, which only suggests that $\hat{\eta}_q = o(1)$ in the freezing regime, but is nevertheless shown to be true in e.g. \cite{AZ2014, AZ2015}.} even in the limit. While $\bar{p}_\epsilon(q)$ is tending to $0$ in the subcritical case $\beta^2 < 2d$, the rate at which it converges to $0$ suddenly does not depend on $q$ anymore if $q$ is sufficiently large (so long as $\beta^2 \ne 0$). Such a phenomenon is called \textbf{pre-freezing} by Fyodorov in \cite{paper}.

Physically, the result suggests that the Bolzmann-Gibbs measure cannot simply be studied by the standard \textbf{replica symmetry breaking (RSB)} methodology in the intermediate regime. This is already apparent in the original argumentation of Fyodorov. Let us summarise a few key ideas from that paper even though they will not be needed in the following. Employing the replica method, one would like to evaluate the RHS of
\begin{align} \label{eq:replica}
\EE\left[\frac{Z_{\epsilon}(q \beta)}{Z_\epsilon(\beta)^q}\right] = \lim_{n \to 0} \EE\left[Z_\epsilon(q \beta) Z_\epsilon(\beta)^{n-q}\right]
\end{align}

\noindent as if $n-q$ were a positive integer. Following \cite{FB2008}, one considers an ``infinite-dimensional limit''\footnote{This is rather remarkable given \eqref{eq:teta_q} because with this scaling, $\tilde{\eta}_q$ is exactly proportional to $d$ in all the regimes.} (by which the physicists mean letting $\beta^2 = \beta_0^2 d$ and sending $d \to \infty$) and rewrites \eqref{eq:replica} as an integral over a matrix variable $Q$ that is related to the overlap. In this expression, Fyodorov views $Z_\epsilon(q\beta)$ as one replica and $Z_\epsilon(\beta)^{n-q}$ as $n-q$ other replicas, and now the crucial observation in \eqref{eq:replica} is that the first replica can affect the behaviour of the subsequent ones. Indeed, the special replica $Z_\epsilon(q \beta)$ behaves as if its effective temperature was much colder, by a factor of $q$, allowing to ``pre-freeze" the subsequent replicas. To phrase it slightly differently, some of the $n-q$ high-temperature replicas may form a cluster with higher overlap among themselves as well as with the low-temperature replica. Using such a modified 1-RSB solution for the saddle point method indeed leads to the right prediction in the pre-freezing regime $\beta^2 \in \left[\frac{2d}{2q-1}, 2d\right)$.

Our results are also related to the papers \cite{AZ2014, AZ2015} by Arguin and Zindy where the authors study essentially the same Boltzmann-Gibbs measure in one and two dimensions. By adapting the Bovier-Kurkova technique from generalised random energy models, they are able to prove that the overlap of two points sampled from the Gibbs measure at low temperature is either $0$ or $1$. This is then used to show that the joint distribution of Gibbs weights has a Poisson--Dirichlet statistics in the limit, which in particular verifies a conjecture by Carpentier and Le Doussal \cite{CLD2001} regarding the statistics of the extrema of a log-correlated field.

\subsection{Intuition for the main result}

It is unclear how to make the replica calculations mathematically rigorous. We do not attempt to justify the method in \cite{paper}, and the proof of Theorem \ref{theo:main} shall follow an entirely different approach. We also have a different angle to interpret the result in the pre-freezing regime.  For every $c \in \RR$, it is possible to rewrite
\begin{align}\label{eq:interpret2}
\EE \left[ \frac{Z_\epsilon(q\beta)}{Z_\epsilon(\beta)^q} \right]
& \asymp \int_{[-1,1]^d} \epsilon^{-\frac{\beta^2 q^2 c^2}{2}+\frac{\beta^2 q}{2}}  \EE \left[e^{\beta q c X_\epsilon(u) - \frac{\beta^2 q^2 c^2}{2}\Var(X_\epsilon(u))} \frac{e^{\beta q(1-c) X_\epsilon(u)}}{M_{\beta, \epsilon}([-1, 1]^d)^q}\right] du.
\end{align}

\noindent Note that the factor $e^{\beta qc X_\epsilon(u) - \frac{\beta^2 q^2 c^2}{2} \Var(X_\epsilon(u))}$ may be seen as a change of measure (by Cameron-Martin theorem, see also Lemma \ref{lem:Girsanov}). In other words, to understand the average size of $Z_\epsilon(q \beta) / Z_\epsilon(\beta)^q$, we sample a point $u$ uniformly in $[-1,1]^d$ and study the size of
\begin{align} \label{eq:interpret3}
\epsilon^{-\frac{\beta^2 q^2 c^2}{2}+\frac{\beta^2 q}{2}}\frac{e^{\beta q(1-c) X_\epsilon(u)}}{M_{\beta, \epsilon}([-1, 1]^d)^q}
\end{align}

\noindent as we shift the mean of $X_\epsilon(u)$ to $\beta q c \Var(X_\epsilon(u)) \asymp -\beta q c \log \epsilon$. If we choose $c$ suitably so that $c = c_0 = \frac{1}{q} \left(\frac{1}{2} + \frac{d}{\beta^2}\right)$ and believe that $M_{\beta, \epsilon}([-1,1]^d)$ is still roughly of order one after the change of measure (this is essentially the content of Lemma \ref{lem:1}), we see by plugging in $X_\epsilon(u) \approx  -\beta q c \log \epsilon$ that the RHS of \eqref{eq:interpret2} should be of order
\begin{align*}
\epsilon^{-\frac{\beta^2 q^2 c^2}{2}} \epsilon^{\frac{\beta^2 q}{2}} e^{\beta q(1-c) \left(-\beta q c \log \epsilon\right)}
= \epsilon^{\frac{(2d-\beta^2)^2}{8\beta^2} - d(q-1)}
\end{align*}

\noindent which is exactly what we want. On the other hand if we apply the heuristic
\begin{align*}
M_{\beta, \epsilon}(B(u, \epsilon)) \underset{\epsilon \to 0}{\sim} \epsilon^{d + \frac{\beta^2}{2}} e^{\beta X_\epsilon(u)},
\end{align*}

\noindent then this particular choice of $c$ results in a change of measure such that $M_{\beta, \epsilon}(B(u, \epsilon))$ becomes roughly of order one, i.e. the Gaussian multiplicative chaos measure $M_{\beta, \epsilon}(\cdot)$ has mass localised at $u$. In other words localisation, despite being extremely rare (and indeed nonexistent in the limit as $\epsilon \to 0$), is responsible for the behaviour of $\tilde{\eta}_q$ in the subcritical regime $\beta^2 < 2d$ when $q$ is sufficiently large. This mechanism does not come into play when $q$ is so small (relative to $\beta^2$) that $c_0 > 1$, however, because simple scaling dominates the expectation and we are better off stopping at $c = 1$ instead when $\beta^2 < \frac{2d}{2q-1}$.

\paragraph{Organisation of paper} The remainder of the paper is organised as follows. In Section \ref{sec:2} we commence by explaining the main ideas of our proof, and in particular how we came up with the suitable choice of $c$ in the subcritical regime. This is followed by results regarding the integrability of Gaussian multiplicative chaos and supremum of Gaussian processes which are instrumental in our proof. The proof of Theorem \ref{theo:main} is then presented in Section \ref{sec:proof}. We also include some further Gaussian comparison results in the Appendix.

\paragraph{Acknowledgement} I would like to thank Nathana\"el Berestycki and Yan Fyodorov for suggesting this problem and for useful comments on preliminary drafts of this article. I am supported by the Croucher Foundation Scholarship and the EPSRC grant EP/L016516/1 for my PhD study at the Cambridge Centre for Analysis.

\section{Preliminaries} \label{sec:2}

\subsection{Change of measure and main ideas}
As mentioned in the introduction, we exploit the idea of change of measure via Cameron-Martin theorem. Due to its importance let us first record it carefully in the following Lemma.

\begin{lem}\label{lem:Girsanov}
Let $c \in \RR$. Then 
\begin{align}\label{eq:Girsanov}
\EE\left[ \frac{Z_\epsilon(\beta q)}{Z_\epsilon(\beta)^q} \right] 
\asymp \epsilon^{-\frac{\beta^2 q^2}{2} [1-(1-c)^2] + \frac{\beta^2 q}{2}} \int_{[-1,1]^d}\EE \left[\frac{e^{\beta q(1-c) X_{\epsilon}(u)}}{\left(\int_{[-1,1]^d} \frac{e^{\beta X_\epsilon(x) - \frac{\beta^2}{2} \Var \left(X_\epsilon(x)\right)}}{(|x-u| + \epsilon)^{\beta^2 q c}}dx \right)^q}\right]du.
\end{align}
\end{lem}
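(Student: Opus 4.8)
The plan is to verify the identity by two successive applications of the Cameron--Martin / Girsanov theorem, one of which is merely a rewriting of the normalising constant $Z_\epsilon(\beta)$ and the other an honest change of measure that shifts the mean of the field. First I would write out the definition
\begin{align*}
\EE\left[\frac{Z_\epsilon(\beta q)}{Z_\epsilon(\beta)^q}\right]
= \EE\left[\frac{\int_{[-1,1]^d} e^{\beta q X_\epsilon(u)}\,du}{\left(\int_{[-1,1]^d} e^{\beta X_\epsilon(x)}\,dx\right)^q}\right]
= \int_{[-1,1]^d} \EE\left[\frac{e^{\beta q X_\epsilon(u)}}{\left(\int_{[-1,1]^d} e^{\beta X_\epsilon(x)}\,dx\right)^q}\right] du,
\end{align*}
using Fubini (all integrands positive). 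The first step is to absorb part of the factor $e^{\beta q X_\epsilon(u)}$ into a change of measure: write $e^{\beta q X_\epsilon(u)} = e^{\beta q c X_\epsilon(u) - \frac{\beta^2 q^2 c^2}{2}\Var(X_\epsilon(u))}\cdot e^{\frac{\beta^2 q^2 c^2}{2}\Var(X_\epsilon(u))}\cdot e^{\beta q(1-c)X_\epsilon(u)}$, and note that $\EE[\,\cdot\,] $ against the density $e^{\beta q c X_\epsilon(u) - \frac{\beta^2 q^2 c^2}{2}\Var(X_\epsilon(u))}$ is, by Cameron--Martin, the same as $\tilde\EE[\,\cdot\,]$ under the tilted measure where $X_\epsilon(\cdot)$ has its mean shifted by $x\mapsto \beta q c\, K_\epsilon(x,u)$.

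Under this tilted law, $X_\epsilon(x)$ has the same distribution as (the original) $X_\epsilon(x) + \beta q c\, K_\epsilon(x,u)$, so the denominator $\left(\int e^{\beta X_\epsilon(x)}dx\right)^q$ becomes $\left(\int e^{\beta X_\epsilon(x)+\beta^2 q c\, K_\epsilon(x,u)}dx\right)^q$. Now I would make the second (cosmetic) change: inside this integral, multiply and divide by $e^{\frac{\beta^2}{2}\Var(X_\epsilon(x))}$ to form the GMC density $e^{\beta X_\epsilon(x)-\frac{\beta^2}{2}\Var(X_\epsilon(x))}$ in the numerator, and collect the leftover $e^{\frac{\beta^2}{2}\Var(X_\epsilon(x))}$ with $e^{\beta^2 q c\, K_\epsilon(x,u)}$. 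Using $K_\epsilon(x,u) = -\log(|x-u|+\epsilon) + g_\epsilon(x,u)$ and $\Var(X_\epsilon(x)) = -\log(2\epsilon) + g_\epsilon(x,x)$, we get $e^{\beta^2 q c\, K_\epsilon(x,u)} = (|x-u|+\epsilon)^{-\beta^2 q c}\, e^{\beta^2 q c\, g_\epsilon(x,u)}$ and $e^{\frac{\beta^2}{2}\Var(X_\epsilon(x))} \asymp \epsilon^{-\beta^2/2}$, all the $g_\epsilon$ factors being bounded above and below by constants independent of $\epsilon$ and hence absorbable into the $\asymp$. Similarly the prefactor $e^{\frac{\beta^2 q^2 c^2}{2}\Var(X_\epsilon(u))} \asymp \epsilon^{-\frac{\beta^2 q^2 c^2}{2}}$. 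Pulling the $\epsilon$-powers out: from the prefactor we get $\epsilon^{-\frac{\beta^2 q^2 c^2}{2}}$, and from the $q$-th power of the denominator we extract $\epsilon^{-\frac{\beta^2}{2}\cdot q} = \epsilon^{-\frac{\beta^2 q}{2}}$ which moves to the numerator as $\epsilon^{+\frac{\beta^2 q}{2}}$. Combining, the overall $\epsilon$-power is $\epsilon^{-\frac{\beta^2 q^2 c^2}{2}+\frac{\beta^2 q}{2}} = \epsilon^{-\frac{\beta^2 q^2}{2}[1-(1-c)^2]+\frac{\beta^2 q}{2}}$ after expanding $1-(1-c)^2 = 2c - c^2$, which matches the stated exponent. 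The remaining expectation is exactly $\EE\big[e^{\beta q(1-c)X_\epsilon(u)} \big/ \big(\int_{[-1,1]^d} e^{\beta X_\epsilon(x)-\frac{\beta^2}{2}\Var(X_\epsilon(x))}(|x-u|+\epsilon)^{-\beta^2 q c}dx\big)^q\big]$, as claimed.

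The main thing to be careful about — and where I'd spend most of the effort — is the bookkeeping of the deterministic $g_\epsilon$ corrections and the constant $2\epsilon$ versus $\epsilon$ inside the logarithm: because there are $q$ copies of the denominator and the shift couples every $x$ to the sampled point $u$, one must check that all these stray factors are genuinely bounded above and below uniformly in $\epsilon$, $u\in[-1,1]^d$ and $x\in[-1,1]^d$, so that they can be swept into the $\asymp$ symbol. This uses precisely the hypothesis $\sup_{x,y}|g_\epsilon(x,y)|\le C_g$ together with $\Var(X_\epsilon(u)) = -\log(2\epsilon)+g_\epsilon(u,u) = (1+o(1))(-\log\epsilon)$ uniformly. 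One also needs to confirm that the Cameron--Martin shift $h(x) := \beta q c\, K_\epsilon(x,u)$ is indeed in the Cameron--Martin space of $X_\epsilon$ (immediate since $K_\epsilon$ is the covariance and $X_\epsilon$ is an honest pointwise-defined Gaussian field for fixed $\epsilon>0$), and that the integrals are finite so that Fubini and the tilting manipulations are all justified — again routine since everything is bounded for fixed $\epsilon$. No genuine obstacle; this lemma is a packaging statement and the content is entirely in the algebra of exponents.
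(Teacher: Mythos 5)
Your overall route is exactly the paper's: tilt by the Gaussian density $e^{\beta qc X_\epsilon(u)-\frac{\beta^2q^2c^2}{2}\Var(X_\epsilon(u))}$, shift the mean of the field by $\beta qc\,K_\epsilon(\cdot,u)$, convert the covariance into the kernel $(|x-u|+\epsilon)^{-\beta^2 qc}$, normalise the denominator into a GMC density, and sweep all $g_\epsilon$ corrections into $\asymp$. However, there is a concrete bookkeeping error in your exponent count. The Cameron--Martin shift acts on \emph{every} occurrence of the field, in particular on the residual numerator $e^{\beta q(1-c)X_\epsilon(u)}$, which under the tilted law becomes $e^{\beta q(1-c)X_\epsilon(u)}\cdot e^{\beta^2 q^2 c(1-c)\Var(X_\epsilon(u))}\asymp e^{\beta q(1-c)X_\epsilon(u)}\cdot\epsilon^{-\beta^2 q^2 c(1-c)}$. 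You record the effect of the shift on the denominator but not on this factor, and your list of $\epsilon$-powers contains only $\epsilon^{-\frac{\beta^2q^2c^2}{2}}$ and $\epsilon^{+\frac{\beta^2 q}{2}}$. The identity you then assert, namely $-\frac{\beta^2q^2c^2}{2}+\frac{\beta^2 q}{2}=-\frac{\beta^2q^2}{2}\bigl[1-(1-c)^2\bigr]+\frac{\beta^2 q}{2}$, is false for general $c$ (it would require $c^2=2c-c^2$). The missing power $-\beta^2q^2c(1-c)$ is precisely what repairs it, since $\frac{c^2}{2}+c(1-c)=\frac{2c-c^2}{2}=\frac{1-(1-c)^2}{2}$; the paper's proof carries this term explicitly as $e^{\beta^2q^2c(1-c)\EE[X_\epsilon(u)^2]}$ in the numerator after the change of measure. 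With that factor restored your argument is complete and coincides with the paper's.
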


\begin{proof}
Recalling \eqref{eq:LGF} that we have $e^{\gamma \Var(X_\epsilon(x))} \asymp \epsilon^{-\gamma}$ for any $\gamma \in \RR$ and $x \in [-1, 1]^d$. Then for any $c \in \RR$,
\begin{align*}
\EE\left[ \frac{Z_\epsilon(\beta q)}{Z_\epsilon(\beta)^q} \right] 
& = \EE \left[ \int_{[-1,1]^d} \frac{e^{\beta q X_\epsilon(u)}}{\left(\int_{[-1,1]^d} e^{\beta X_\epsilon(x)}dx\right)^q}du\right]\\
& \asymp \epsilon^{-\frac{\beta^2 q^2 c^2}{2}} \int_{[-1,1]^d} \EE \left[e^{\beta q c X_\epsilon(u) - \frac{\beta^2 q^2 c^2}{2} \Var(X_\epsilon(u))} \frac{e^{\beta q(1-c)X_\epsilon(u)}}{\left(\int_{[-1,1]^d} e^{\beta X_\epsilon(x)}dx\right)^q}\right] du\\
& = \epsilon^{-\frac{\beta^2 q^2 c^2}{2}} \int_{[-1,1]^d} \EE \left[\frac{e^{\beta q(1-c)X_\epsilon(u) + \beta^2 q^2 c(1-c) \EE[X_\epsilon(u)^2]}}{\left(\int_{[-1,1]^d} e^{\beta X_\epsilon(x) + \beta^2 q c \EE[X_\epsilon(x) X_\epsilon(u)]}dx\right)^q}\right] du\\
& \asymp \epsilon^{-\frac{\beta^2 q^2 c^2}{2} - \beta^2 q^2 c(1-c)} \int_{[-1,1]^d} \EE \left[\frac{e^{\beta q(1-c)X_\epsilon(u)}}{\left(\int_{[-1,1]^d} \frac{e^{\beta X_\epsilon(x)}}{(|x-u|+\epsilon)^{\beta^2 q c}} dx\right)^q}\right] du\\
& \asymp
\epsilon^{-\frac{\beta^2 q^2}{2} [1-(1-c)^2] + \frac{\beta^2 q}{2}} \int_{[-1,1]^d}\EE \left[\frac{e^{\beta q(1-c) X_{\epsilon}(u)}}{\left(\int_{[-1,1]^d} \frac{e^{\beta X_\epsilon(x) - \frac{\beta^2}{2} \Var \left(X_\epsilon(x)\right)}}{(|x-u| + \epsilon)^{\beta^2 q c}}dx \right)^q}\right]du.
\end{align*}

\end{proof}

The proof of Theorem \ref{theo:main} will be divided into three parts, each corresponding to a different regime. In each regime the goal is to prove matching lower and upper bounds for $\tilde{\eta}_q$ separately. With Lemma \ref{lem:Girsanov}, our task is to choose the parameter $c \in \RR$ carefully so that if we have a good uniform control over the integrand, we should be able to obtain the desired bounds. 

\begin{itemize}
\item In the high temperature regime where $\beta^2 < \frac{2d}{2q-1}$, it is predicted that $\tilde{\eta}_q = -\frac{\beta^2q^2}{2} + \frac{\beta^2 q}{2}$. Even though this simple scaling is true for $\eta_q$ only in a smaller interval $\beta^2 < \frac{2d}{q^2}$, one may guess that the mechanism that explains this behaviour is the same in both cases. Given the thick point picture explained in \cite{B2017}, it is reasonable to choose $c = 1$.

\item In the intermediate regime where $\frac{2d}{2q-1} \le \beta^2 < 2d$,  there is a competition between the simple scaling term $\epsilon^{-\frac{\beta^2 q^2}{2} [1-(1-c)^2] + \frac{\beta^2 q}{2}}$ and the integral. Suppose we were allowed to ignore the numerator $e^{\beta q(1-c) X_\epsilon(u)}$ in the expectation in \eqref{eq:Girsanov} for $c \in [0, 1]$. After all if it were too large we would probably expect the denominator to be large as well, and such event has low probability anyway. Then the size of $\EE \left[Z_\epsilon(q\beta) / Z_\epsilon(\beta)^q\right]$ might be similar to that of
\begin{align}\label{eq:heuristic}
& \epsilon^{-\frac{\beta^2 q^2}{2} [1-(1-c)^2] + \frac{\beta^2 q}{2}} \int_{[-1,1]^d}\EE \left[\left(\int_{[-1,1]^d} \frac{e^{\beta X_\epsilon(x) - \frac{\beta^2}{2} \Var \left(X_\epsilon(x)\right)}}{(|x-u| + \epsilon)^{\beta^2 q c}}dx \right)^{-q}\right]du.
\end{align}

The negative $q$-th moment in \eqref{eq:heuristic} is unlikely to blow up as $\epsilon \to 0$, but we may be worried that it will vanish, or equivalently that
\begin{align*}
\int_{[-1,1]^d} \frac{e^{\beta X_\epsilon(x) - \frac{\beta^2}{2} \Var \left(X_\epsilon(x)\right)}}{(|x-u| + \epsilon)^{\beta^2 q c}}dx
\end{align*}

\noindent is exploding in the limit. To rule out such possibility, it suffices to show that 

\begin{align}\label{eq:integrand2}
\EE \left[\left(\int_{[-1,1]^d} \frac{e^{\beta X_\epsilon(x) - \frac{\beta^2}{2} \Var \left(X_\epsilon(x)\right)}}{(|x-u| + \epsilon)^{\beta^2 q c}}dx \right)^{t}\right]
\end{align}

\noindent is upper-bounded by a constant independent of $\epsilon$ for some $t > 0$. From Lemma \ref{lem:1}(i) which we shall present in the next subsection, this is actually true whenever 
\begin{align}\label{eq:heuristic2}
\beta^2 q c - d + \frac{\beta^2}{2}(t-1) < 0. 
\end{align}

We may ignore $t$ since it can be taken to be arbitrarily small. Cheating further, we replace \eqref{eq:heuristic2} by its nonstrict version and consider the following optimisation problem:
\begin{align*}
\text{minimise} \quad  f(c) := -\frac{\beta^2 q^2}{2}[1 - (1-c)^2] + \frac{\beta^2 q}{2}
\qquad \text{s.t.} \quad \beta^2 q c - d - \frac{\beta^2}{2} \le 0, \quad c \in [0, 1].
\end{align*}

Solving this optimisation yields $c = c_0 =  \frac{1}{q}\left(\frac{1}{2} + \frac{d}{\beta^2}\right)$ which happens to be the perfect choice because $f(c_0) = \frac{(2d - \beta^2)^2}{8\beta^2} - d(q-1)$ as explained earlier. When we prove the upper bound for $\tilde{\eta}_q$, the idea of ignoring $e^{\beta q(1-c) X_\epsilon(u)}$ and studying \eqref{eq:integrand2} instead of the negative $q$-th moment will be pursued by means of reverse H\"older's and Jensen's inequalities.

\item In the low temperature regime where $\beta^2 \ge 2d$, there is no way to upper bound \eqref{eq:integrand2} by a constant independent of $\epsilon$ anymore, but this can be replaced by an estimate in Lemma \ref{lem:1}(ii) and we just have to find some $c \in \RR$ that can give us a matching lower bound for $\EE[Z_\epsilon(q\beta) / Z_\epsilon(\beta)^q]$ (hence a matching upper bound for $\tilde{\eta}_q$). As for the complementary bound, it happens that the answer can be obtained directly using the physical observation that the Boltzmann-Gibbs measure is localised.
\end{itemize}

We shall now discuss the results needed to realize the aforementioned strategy.
 
\subsection{Integrability of Gaussian multiplicative chaos} 
The first result concerns the study of positive moments of multiplicative chaos, which plays an indispensable role in proving upper bounds for $\tilde{\eta}_q$ in all three regimes.

\begin{lem}\label{lem:1}
Let $u \in (-1,1)^d$. Let $s > 0, 0 < t \le 1$ and define
\begin{align*}
l : = s - d + \frac{\beta^2}{2}(t-1).
\end{align*}

\begin{itemize}
\item[(i)] If $l < 0$, then
\begin{align}\label{eq:lem1_1}
\EE \left[ \left( \int_{[-1,1]^d} \frac{e^{\beta X_\epsilon(x) - \frac{\beta^2}{2}\Var(X_\epsilon(x))}}{(|x-u|+\epsilon)^s}dx\right)^t \right] \le C.
\end{align}

\item[(ii)] If $l\ge0$, then
\begin{align} \label{eq:lem1_2}
\EE \left[ \left( \int_{[-1,1]^d} \frac{e^{\beta X_\epsilon(x) - \frac{\beta^2}{2}\Var(X_\epsilon(x))}}{(|x-u|+\epsilon)^s}dx\right)^t \right] \le C \epsilon^{-lt} \log (1/\epsilon).
\end{align}

\end{itemize}

All the constants $C>0$ above may depend on $d, \beta^2, s, t$ but not on $\epsilon \in (0, 1]$.

\end{lem}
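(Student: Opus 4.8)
The plan is to decompose the singular weight $(|x-u|+\epsilon)^{-s}$ dyadically around $u$, use the subadditivity of $y\mapsto y^{t}$ (this is precisely where $0<t\le1$ is used) to reduce to $t$-th moments of Gaussian multiplicative chaos on balls, and then sum a geometric series whose common ratio is governed exactly by the sign of $l$.

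\emph{Step 1 (dyadic reduction).} Write $M_{\beta,\epsilon}(dx):=e^{\beta X_\epsilon(x)-\frac{\beta^2}{2}\Var(X_\epsilon(x))}\,dx$ and $I_\epsilon:=\int_{[-1,1]^d}(|x-u|+\epsilon)^{-s}M_{\beta,\epsilon}(dx)$, so the quantities in \eqref{eq:lem1_1}--\eqref{eq:lem1_2} are $\EE[I_\epsilon^{t}]$. Since $u\in(-1,1)^d$ we have $[-1,1]^d\subset B(u,R)$ with $R:=2\sqrt d$. Cover $[-1,1]^d$ by the central ball $B(u,\epsilon)$ and the dyadic annuli $A_k:=\{x:2^{-k-1}<|x-u|\le 2^{-k}\}$ for $k_0\le k\le K_\epsilon$, where $2^{-k_0}\asymp R$ and $K_\epsilon\asymp\log(1/\epsilon)$; on $A_k$ one has $(|x-u|+\epsilon)^{-s}\asymp 2^{ks}$ and on $B(u,\epsilon)$ one has $(|x-u|+\epsilon)^{-s}\asymp\epsilon^{-s}$. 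Using $A_k\subset B(u,2^{-k})$ and then subadditivity of $y\mapsto y^t$ for $0<t\le1$,
\[
\EE[I_\epsilon^{t}]\;\le\;C\,\epsilon^{-st}\,\EE\!\big[M_{\beta,\epsilon}(B(u,\epsilon))^{t}\big]\;+\;C\sum_{k=k_0}^{K_\epsilon}2^{kst}\,\EE\!\big[M_{\beta,\epsilon}(B(u,2^{-k}))^{t}\big].
\]

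\emph{Step 2 (chaos moment bound and the geometric sum).} The key input is the uniform estimate
\[
\EE\!\big[M_{\beta,\epsilon}(B(u,r))^{t}\big]\;\le\;C\,r^{\,\xi(t)},\qquad \xi(t):=\Big(d+\tfrac{\beta^2}{2}\Big)t-\tfrac{\beta^2}{2}t^{2},
\]
valid for every $\epsilon\in(0,1]$, every $r\in[\epsilon,R]$, every $u\in(-1,1)^d$, with $C=C(d,\beta^2,t)$; the case $r=\epsilon$ also controls the central ball. Granting this and observing the arithmetic identity $\xi(t)-st=-tl$, Step~1 gives $\EE[I_\epsilon^{t}]\le C\big(\epsilon^{-tl}+\sum_{k=k_0}^{K_\epsilon}2^{ktl}\big)$. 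If $l<0$, then $\epsilon^{-tl}\le1$ and $\sum_{k\ge k_0}2^{ktl}\le 2^{k_0tl}(1-2^{-t|l|})^{-1}<\infty$, which proves (i). If $l\ge0$, then $\sum_{k=k_0}^{K_\epsilon}2^{ktl}\le C(1+\log(1/\epsilon))\,2^{K_\epsilon tl}\le C\,\epsilon^{-tl}\log(1/\epsilon)$, the factor $1+\log(1/\epsilon)$ being genuinely needed only in the borderline case $l=0$; combined with the central term this yields (ii).

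\emph{Step 3 (proof of the moment bound — the main obstacle).} The sharp exponent $\xi(t)$ is essential here: the naive bound $\EE[M_{\beta,\epsilon}(B(u,r))^{t}]\le|B(u,r)|^{t}\asymp r^{dt}$ coming from Jensen's inequality would only give (i) under the strictly stronger hypothesis $s<d$. To obtain $\xi(t)$ I would invoke Kahane's convexity inequality, comparing the field $\beta X_\epsilon$ with an exactly scale-invariant (e.g.\ $\star$-scale-invariant) log-correlated field truncated at scale $\epsilon$: since $\beta^2K_\epsilon(x,y)=-\beta^2\log(|x-y|+\epsilon)+\beta^2g_\epsilon(x,y)\le \beta^2\log\tfrac{1}{|x-y|\vee\epsilon}+\beta^2C_g$, the reference field's covariance dominates that of $\beta X_\epsilon$ after adding a suitable finite constant, which is realised by adding an independent Gaussian; renormalising multiplies any $t$-th moment by $e^{\frac12\beta^2C_g\,t(t-1)}\le1$ for $t\le1$, so uniformity is preserved. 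As $y\mapsto y^{t}$ is concave for $t\le1$, Kahane's inequality gives $\EE[M_{\beta,\epsilon}(B(u,r))^{t}]\le C\,\EE[\widetilde M_\epsilon(B(u,r))^{t}]$ for the reference chaos $\widetilde M_\epsilon$, and the exact scaling relation $\widetilde M_\epsilon(B(u,r))\overset{d}{=}r^{d}e^{\beta\Omega_r-\frac{\beta^2}{2}\log(1/r)}\,\widetilde M_{\epsilon/r}(B(0,1))$, with $\Omega_r\sim\mathcal N(0,\log(1/r))$ independent of $\widetilde M_{\epsilon/r}(B(0,1))$, yields $\EE[\widetilde M_\epsilon(B(u,r))^{t}]=r^{\xi(t)}\,\EE[\widetilde M_{\epsilon/r}(B(0,1))^{t}]\le r^{\xi(t)}|B(0,1)|^{t}$, the last step again by Jensen and uniform in $\epsilon/r\le1$. (For $r=\epsilon$ one may instead argue directly: $X_\epsilon$ is essentially constant on $B(u,\epsilon)$, so $M_{\beta,\epsilon}(B(u,\epsilon))\asymp\epsilon^{d+\beta^2/2}e^{\beta X_\epsilon(u)}$ is log-normal, giving $\EE[M_{\beta,\epsilon}(B(u,\epsilon))^{t}]\asymp\epsilon^{\xi(t)}$.) I expect this step — extracting the sharp exponent $\xi(t)$ uniformly in $\epsilon$ and over the whole range of scales $r\in[\epsilon,R]$ — to be the only genuinely delicate point; Steps~1 and 2 are then elementary bookkeeping.
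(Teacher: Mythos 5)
Your Steps 1 and 2 are essentially the paper's own argument: the paper also decomposes $[-1,1]^d$ into a central box of width $\asymp\epsilon$ and dyadic annuli $I_{k-1}\setminus I_k$ around $u$, uses subadditivity of $y\mapsto y^t$ for $0<t\le 1$, reduces to the $t$-th moment of the chaos of a ball of radius $2^{-k}$ (which, after the change of variables $x\mapsto x2^{-k}$, is exactly your bound $\EE[M_{\beta,\epsilon}(B(u,r))^t]\lesssim r^{\xi(t)}$ with $\xi(t)=(d+\beta^2/2)t-\beta^2t^2/2$), and sums the geometric series $\sum_k 2^{ktl}$. The only structural difference is that the paper obtains the ball-moment bound by comparing the rescaled field $X_\epsilon(2^{-k}\cdot)$ with a member $X_{2^k\epsilon}$ of the \emph{same} family plus an independent Gaussian $\Omega_k\sim N(0,k\log 2)$, and then applies Jensen, whereas you compare with an external exactly scale-invariant reference field and invoke its exact scaling relation. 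Both are legitimate; the paper's route avoids having to justify the exact scaling identity.

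There is, however, a genuine error in your Step 3: you apply Kahane's convexity inequality in the wrong direction. You arrange for the \emph{reference} field (after adding an independent Gaussian) to have the larger covariance and then claim $\EE[M_{\beta,\epsilon}(B(u,r))^t]\le C\,\EE[\widetilde M_\epsilon(B(u,r))^t]$. But for the concave function $F(y)=y^t$ ($t\le1$), Kahane's inequality states that the field with \emph{larger} covariance has the \emph{smaller} value of $\EE[F(\cdot)]$; with your arrangement you would only get $\EE[M_{\beta,\epsilon}(B(u,r))^t]\ge \EE[\widetilde M_\epsilon(B(u,r))^t]$, which is useless for the upper bound. The fix is to put the compensating independent Gaussian $Y$ on the field $X_\epsilon$ itself, so that $\EE[(X_\epsilon(x)+Y)(X_\epsilon(y)+Y)]\ge \widetilde K_\epsilon(x,y)$ (possible since the two kernels differ by a uniformly bounded function), and then bound the concave moment of the larger-covariance field $X_\epsilon+Y$ by that of $\widetilde X_\epsilon$; the lognormal prefactor $e^{-\frac{\beta^2}{2}t(t-1)\Var Y}\ge 1$ is still a harmless constant. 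This is exactly what the paper does: the Gaussian $Y\sim N(0,2C_g)$ is added to the rescaled original field, whose covariance then dominates that of $X_{2^k\epsilon}+\Omega_k$, and the concave $t$-th moment of the former is bounded above by that of the latter. With that one-line correction (and a careful justification of the exact scaling relation for the reference chaos, which you correctly flag as the delicate input), your argument goes through.
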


\begin{proof}
This is a simple generalisation of \cite[Lemma C.1]{RV2014}. For simplicity we just treat the case where $u = 0$. If $\epsilon \ge \frac{1}{2}$, an easy application of Jensen's inequality suggests that
\begin{align*}
U_\epsilon 
&: = \EE \left[ \left( \int_{[-1,1]^d} \frac{e^{\beta X_\epsilon(x) - \frac{\beta^2}{2}\Var(X_\epsilon(x))}}{(|x|+\epsilon)^s}dx\right)^t \right] \le 2^{(s+d)t}.
\end{align*}

\noindent For $\epsilon < \frac{1}{2}$, pick $n \in \NN$ such that $2^n \epsilon \in [\frac{1}{2}, 1)$. Write $I_k = [-2^{-k}, 2^{-k}]^d$, we have
\begin{align*}
U_\epsilon 
& \le \EE \left[ \left( \int_{I_n} \frac{e^{\beta X_\epsilon(x) - \frac{\beta^2}{2}\Var(X_\epsilon(x))}}{(|x|+\epsilon)^s}dx\right)^t \right]
+ \sum_{k=1}^{n} \EE \left[ \left( \int_{I_{k-1} \setminus I_k} \frac{e^{\beta X_\epsilon(x) - \frac{\beta^2}{2}\Var(X_\epsilon(x))}}{(|x|+\epsilon)^s}dx\right)^t \right]\\
& \le 2^{(s-d)tn + st}\EE \left[ \left( \int_{I_0} e^{\beta X_\epsilon(x2^{-n}) - \frac{\beta^2}{2}\Var(X_\epsilon(x2^{-n}))}dx\right)^t \right]\\
& \qquad \qquad + 2^{dt}\sum_{k=1}^{n} 2^{(s-d)tk}\EE \left[ \left( \int_{I_{0}} e^{\beta X_\epsilon\left(x2^{-(k-1)}\right) - \frac{\beta^2}{2}\Var\left(X_\epsilon\left(x2^{-(k-1)}\right)\right)}dx\right)^t \right]\\
\end{align*}

\noindent Let $\Omega_k \sim N(0, k \log 2)$ and $Y \sim N(0, 2C_g)$ be independent of everything. Since
\begin{align*}
\EE \left[\left(X_\epsilon\left(x2^{-k}\right) + Y \right)\left(X_\epsilon\left( y2^{-k}\right) + Y \right)\right]
& = - \log \left(|x-y| + 2^k\epsilon\right) + k\log 2+ g_\epsilon(x, y) + 2C_g\\
& \ge \EE \left[\left(X_{2^k\epsilon}(x) + \Omega_k\right)\left(X_{2^k\epsilon}(y)+ \Omega_k\right)\right],
\end{align*}

\noindent it follows by Kahane's convexity inequality (Lemma \ref{lem:kahane}) that
\begin{align*}
& \EE \left[ \left( \int_{I_0} e^{\beta X_\epsilon(x2^{-k}) - \frac{\beta^2}{2}\Var(X_\epsilon(x2^{-k}))}dx\right)^t \right]\\
& \qquad \qquad = e^{-\beta^2(t-1)t C_g}\EE \left[ \left( \int_{I_0} e^{\beta \left(X_\epsilon(x2^{-k})+Y\right) - \frac{\beta^2}{2}\Var\left(X_\epsilon(x2^{-k})+Y\right)}dx\right)^t \right]\\
& \qquad \qquad \le e^{-\beta^2(t-1)t C_g} \EE \left[ \left( \int_{I_0} e^{\beta \left(X_{2^k\epsilon}(x) + \Omega_k\right) - \frac{\beta^2}{2}\Var\left(X_{2^k\epsilon}(x) + \Omega_k\right)}dx\right)^t \right]\\
& \qquad \qquad \le e^{-\beta^2(t-1)t C_g} 2^{dt} 2^{\frac{\beta^2}{2}(t-1) kt}
\end{align*}

\noindent and hence
\begin{align} \label{eq:bound}
U_\epsilon
& \le e^{-\beta^2(t-1)t C_g} 2^{(s+d)t}\left[ 2^{ltn} + \sum_{k=1}^n 2^{lt(k-1)}\right]
\le e^{-\beta^2(t-1)t C_g} 2^{(s+d)t}\left[ 2^{ltn} +\frac{2^{ltn}-1}{2^{lt} - 1} \right].
\end{align}

\noindent Therefore $U_\epsilon$ is uniformly bounded in $\epsilon \in (0,1]$ as soon as we have $2^{lt} < 1$, or $l<0$ which is the condition in the first statement. If $l \ge 0$, inequality \eqref{eq:bound} suggests that $U_\epsilon \lesssim n 2^{ltn}$. But then by construction we have $2^{tln} \le \epsilon^{-lt}$ and $n \le -\log_2 \epsilon$. This concludes the proof of the lemma.
\end{proof}

The second result is essentially the existence of negative moments of Gaussian multiplicative chaos, which is used in lower-bounding $\tilde{\eta}_q$ in the subcritical regimes $\beta^2 < 2d$.

\begin{lem}\label{lem:negm}
Let $\beta^2 < 2d$. Then for $q > 0$ there exists some constant $C > 0$ independent of $\epsilon \in (0, 1]$ such that
\begin{align*}
\EE \left[ \left(\int_{[-1,1]^d} e^{\beta X_\epsilon(x) - \frac{\beta^2}{2} \Var(X_\epsilon(x))}dx\right)^{-q}\right] \le C.
\end{align*}
\end{lem}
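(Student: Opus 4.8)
\textbf{Proof proposal for Lemma \ref{lem:negm}.}

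The plan is to reduce the negative-moment estimate to a statement about the positive mass of a sub-critical Gaussian multiplicative chaos measure on a fixed compact set, and then to exploit the known Gaussian-comparison machinery already in play. First I would restrict the integral: since the integrand is nonnegative, $\int_{[-1,1]^d} e^{\beta X_\epsilon(x) - \frac{\beta^2}{2}\Var(X_\epsilon(x))}dx \ge \int_{[0,\delta]^d} e^{\beta X_\epsilon(x) - \frac{\beta^2}{2}\Var(X_\epsilon(x))}dx$ for any fixed small cube, so it suffices to lower-bound the mass of a small region. The natural route is then to show that the family $\nu_\epsilon := e^{\beta X_\epsilon(x) - \frac{\beta^2}{2}\Var(X_\epsilon(x))}dx$, restricted to such a cube, has total mass bounded below in probability uniformly in $\epsilon$, with a tail of the form $\PP(\nu_\epsilon([0,\delta]^d) < \lambda) \le C\lambda^{\kappa}$ for some $\kappa > q$; integrating this tail against the negative moment $\EE[\nu_\epsilon(\cdot)^{-q}] = q\int_0^\infty \lambda^{-q-1}\PP(\nu_\epsilon(\cdot) < \lambda)d\lambda$ (after checking the behaviour near $\lambda = \infty$, which is trivial because the mass is also bounded above in expectation by Lemma \ref{lem:1}(i) with $s=0$) then gives the claim.

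The key technical input I would use is Kahane's convexity inequality (Lemma \ref{lem:kahane}), exactly as in the proof of Lemma \ref{lem:1}, to dominate $\nu_\epsilon$ from below by a reference chaos for which small-deviation estimates are classical. Concretely, writing $K_\epsilon(x,y) = -\log(|x-y|+\epsilon) + g_\epsilon(x,y)$ and using $\sup|g_\epsilon| \le C_g$, one can compare $X_\epsilon + Y$ (with $Y \sim N(0, 2C_g)$ independent) against an exactly scale-invariant or $\star$-scale-invariant log-correlated field $\tilde X_\epsilon$ mollified at scale $\epsilon$, whose associated sub-critical chaos $\tilde M_\gamma$ with $\gamma = |\beta|$, $\gamma^2 < 2d$, is known to have all negative moments finite and uniform-in-$\epsilon$ control of them; alternatively one can argue directly via a multiscale decomposition. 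The cleanest formulation: by Kahane's inequality, $\EE[F(\nu_\epsilon([0,\delta]^d))] \le \EE[F(\tilde\nu_\epsilon([0,\delta]^d))]$ for convex decreasing $F$, and $x \mapsto x^{-q}$ is convex decreasing on $(0,\infty)$, so the negative moment of $\nu_\epsilon$ is controlled by that of the reference measure, reducing everything to the known sub-critical estimate. The uniform-in-$\epsilon$ part follows because the reference family is itself a mollification converging to a nontrivial limit with the usual moment bounds.

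The main obstacle I anticipate is the uniformity in $\epsilon$ of the negative-moment (equivalently, small-ball) estimate for the reference chaos: pointwise finiteness of negative moments of sub-critical GMC is standard, but one needs it stable along the mollification as $\epsilon \to 0$. The standard fix is a scaling/renormalisation argument: decompose $[0,\delta]^d$ into $\epsilon^{-d}$ sub-cubes of side $\epsilon$, use approximate independence of the coarse field across well-separated sub-cubes together with the exact self-similarity of the reference field to write $\tilde\nu_\epsilon([0,\delta]^d) \stackrel{d}{\approx} \epsilon^{d + \beta^2/2} e^{\beta\Omega} \sum_i \tilde\nu_1^{(i)}$ with $\Omega$ a coarse Gaussian and $\tilde\nu_1^{(i)}$ i.i.d. copies of the order-one chaos mass; a second-moment (Paley--Zygmund) argument on the sum then yields $\PP(\tilde\nu_\epsilon < \lambda) \lesssim \lambda^\kappa$ with $\kappa$ as large as one likes by iterating the scaling, which in particular beats any fixed $q$. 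If one prefers to avoid re-deriving this, the shortcut is simply to invoke the existence of negative moments of sub-critical GMC from the cited literature (e.g.\ \cite{RV2014}) together with the uniform convergence of the mollified approximations, and note that a uniformly-bounded family of nonnegative random variables converging in distribution to a limit with finite negative moments has uniformly bounded negative moments, provided one has a uniform small-ball bound — which again is the one point that genuinely needs the scaling argument above.
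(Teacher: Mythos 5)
Your proposal is correct and follows essentially the same route as the paper: a Kahane convexity comparison of $X_\epsilon$ (plus an independent Gaussian $Y$ absorbing the bounded $g_\epsilon$) against a mollified reference log-correlated field, applied to the convex function $x\mapsto x^{-q}$, reducing the claim to uniform-in-$\epsilon$ negative moments of a subcritical reference chaos. The uniformity issue you flag as the one genuine obstacle is resolved in the paper simply by citing \cite[Proposition 3.5]{RV2010}, which gives convergence (hence boundedness) of $\EE[\tilde{M}_{\beta,\epsilon}([-1,1]^d)^{-q}]$ along the mollification, so your self-contained small-ball/Paley--Zygmund argument is not needed.
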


\begin{proof}
The result follows from a standard Gaussian comparison argument and we shall be brief here. Let $\tilde{X}$ be a centred log-correlated Gaussian field defined on $D \subset \RR^d$ with
\begin{align*}
\EE[\tilde{X}(x) \tilde{X}(y)] = -\log |x-y| + \tilde{g}(|x-y|), \qquad x, y \in D
\end{align*}

\noindent where $\tilde{g}$ is some continuous function on $\RR$. We remark that for any dimension $d$, such a log-correlated Gaussian field exists (see for instance star scale invariant covariance kernels in \cite[Section 2]{RV2014}). Let $\varphi$ be some non-negative continuous function supported on the unit ball centred at the origin such that $\varphi$ is positive definite and $\int_{\RR^d} \varphi(x)dx = 1$. Writing $\varphi_\epsilon = \epsilon^{-d}\varphi(\cdot/\epsilon)$, one can construct, for each $\epsilon \in (0, 1]$, an approximate field $\tilde{X}_\epsilon := \tilde{X} \ast \varphi_\epsilon$ with mean zero and covariance of the form
\begin{align*}
\EE[\tilde{X}_\epsilon(x) \tilde{X}_\epsilon(y)] = -\log (|x-y|+\epsilon) + \tilde{g}_\epsilon(x, y), \qquad x, y \in D.
\end{align*}

\noindent where $|\tilde{g}_\epsilon(x, y)| \le C_{\tilde{g}}$ for some $C_{\tilde{g}}>0$ independent of $x, y \in D$ and $\epsilon \in (0, 1]$.

When $\beta^2 < 2d$, the sequence of measures $\tilde{M}_{\beta, \epsilon}(dx) := e^{\beta \tilde{X}_\epsilon(x) - \frac{\beta^2}{2}\Var(\tilde{X}_\epsilon(x))}dx$ converges in distribution, as $\epsilon \to 0$, to some non-trivial limit $\tilde{M}_{\beta}(dx)$ in the space of Radon measures on $D$ equipped with the weak$^*$ topology (see e.g. \cite[Theorem 2.1]{RV2010} or \cite[Theorem 1.1]{B2017}). Moreover by e.g. \cite[Proposition 3.5]{RV2010},  we have, for any $q > 0$,
\begin{align*}
\EE\left[\tilde{M}_{\beta, \epsilon}([-1,1]^d)^{-q}\right] \xrightarrow{\epsilon \to 0} \EE\left[\tilde{M}_{\beta}([-1,1]^d)^{-q}\right] < \infty.
\end{align*}

\noindent Introducing a random variable $Y \sim N(0, C_g + C_{\tilde{g}})$ that is independent of $\tilde{X}$, we have
\begin{align*}
\EE[X_\epsilon(x) X_\epsilon(y)] \le \EE\left[ (\tilde{X}_\epsilon(x) + Y) (\tilde{X}_\epsilon(y) + Y)\right], \qquad \forall x, y \in D.
\end{align*}

\noindent Therefore, we conclude by Kahane's convexity inequality (Lemma \ref{lem:kahane}) that
\begin{align*}
\EE \left[ \left(\int_{[-1,1]^d} e^{\beta X_\epsilon(x) - \frac{\beta^2}{2} \Var(X_\epsilon(x))}dx\right)^{-q}\right]
& \le \EE \left[ \left(\int_{[-1,1]^d} e^{\beta (\tilde{X}_\epsilon(x)+Y) - \frac{\beta^2}{2} \Var(\tilde{X}_\epsilon(x) + Y)}dx\right)^{-q}\right]\\
& = e^{\frac{\beta^2 q}{2}(q+1)(C_g + C_{\tilde{g}})} \EE\left[\tilde{M}_{\beta, \epsilon}([-1,1]^d)^{-q}\right]
\end{align*}

\noindent is bounded uniformly in $\epsilon \in (0, 1]$.
\end{proof}

\subsection{Supremum of Gaussian processes}
The third result concerns the integrability of exponential moments of the supremum of Gaussian processes. As discussed earlier it is often useful to approximate some integrals by discrete sums. In order to pursue this idea in various places in the proof of the main theorem, we need the following discretisation control.

\begin{lem} \label{lem:2}
Let $\epsilon \in (0, 1]$. Partition $[-1,1]^d$ into boxes $\{B_i\}_i$ of equal width $\lceil \epsilon^{-1} \rceil^{-1}$, and let $r_i$ be the centre of box $B_i$. Then for any $c>0$ and $\kappa > 0$ there exists constant $C_{c, \kappa}$ independent of $\epsilon$ such that
\begin{align}\label{eq:lem2}
\EE\left[ \exp \left( c \sup_{i} \sup_{v \in B_i} \left|X_\epsilon(v) - X_\epsilon(r_i)\right|\right) \right] \le C_{c, \kappa} \epsilon^{-\kappa}.
\end{align}

\end{lem}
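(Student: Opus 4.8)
The plan is to bound the left-hand side of \eqref{eq:lem2} by reducing the supremum over the continuum of boxes and interior points to a quantity controlled by a chaining/Borell-type argument, exploiting that $\sup_v |X_\epsilon(v) - X_\epsilon(r_i)|$ is the supremum of a Gaussian process whose increments have variance that is \emph{uniformly} small (of order $1$, not blowing up) because the logarithmic singularity in \eqref{eq:LGF} is regularised at scale $\epsilon$ while the boxes also have width $\asymp \epsilon$. First I would record the key variance estimate: for $v, w \in B_i$ (so $|v - w| \lesssim \epsilon$),
\begin{align*}
\EE\left[\left(X_\epsilon(v) - X_\epsilon(w)\right)^2\right]
= K_\epsilon(v,v) + K_\epsilon(w,w) - 2K_\epsilon(v,w)
\le C
\end{align*}
for some constant $C$ depending only on $d$ and $C_g$, since each term is $-\log(\cdot + \epsilon) + g_\epsilon$ with the arguments of the logarithms all comparable to $\epsilon$, and the $g_\epsilon$ are uniformly bounded; more precisely one gets a Dudley-type modulus of continuity, $\EE[(X_\epsilon(v)-X_\epsilon(w))^2]^{1/2} \le C(1 + |\log(|v-w|/\epsilon)|^{1/2})$ say, which is what chaining needs.

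Next, for a single box $B_i$, I would apply the Borell--TIS inequality to the centred Gaussian process $v \mapsto X_\epsilon(v) - X_\epsilon(r_i)$ on $B_i$: writing $S_i := \sup_{v \in B_i}|X_\epsilon(v) - X_\epsilon(r_i)|$ and $\sigma^2 := \sup_{v \in B_i}\Var(X_\epsilon(v) - X_\epsilon(r_i)) \le C$, Borell--TIS gives $\PP(S_i \ge \EE[S_i] + u) \le 2e^{-u^2/(2\sigma^2)}$, and Dudley's entropy bound gives $\EE[S_i] \le C'$ uniformly in $i$ and $\epsilon$ (the entropy integral over a box of radius $\epsilon$ with the modulus above is finite and $\epsilon$-independent after rescaling $v \mapsto v/\epsilon$). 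Hence each $S_i$ has a uniform Gaussian tail: $\PP(S_i \ge \lambda) \le 2 e^{-(\lambda - C')^2/(2C)}$ for $\lambda \ge C'$, which already controls $\EE[e^{cS_i}]$ by a constant independent of $i$ and $\epsilon$.

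To pass from a single box to the supremum over all $\asymp \epsilon^{-d}$ boxes, I would use a union bound: $\PP(\sup_i S_i \ge \lambda) \le \sum_i \PP(S_i \ge \lambda) \le C \epsilon^{-d} e^{-(\lambda - C')^2/(2C)}$. Then
\begin{align*}
\EE\left[e^{c \sup_i S_i}\right]
= \int_0^\infty c e^{c\lambda}\, \PP\!\left(\sup_i S_i \ge \lambda\right) d\lambda
\le C'' + C \epsilon^{-d}\int_{C'}^\infty c e^{c\lambda} e^{-(\lambda-C')^2/(2C)}\, d\lambda,
\end{align*}
and the remaining Gaussian integral is a finite constant depending only on $c, C, C'$ (the quadratic exponent dominates $e^{c\lambda}$). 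This yields the bound $C_{c,\kappa}\,\epsilon^{-d}$; taking $\kappa \ge d$ suffices, and for smaller $\kappa$ one notes the bound is trivially weaker, so \eqref{eq:lem2} holds as stated (in fact with $\epsilon^{-d}$, hence for any $\kappa > 0$ after possibly enlarging $C_{c,\kappa}$, or — if one wants the sharp $\epsilon^{-\kappa}$ with $\kappa$ arbitrarily small — by replacing the crude union bound with a sharper count exploiting that only the boxes near the \emph{few} thick points contribute, though this refinement is not needed since $d$ is a fixed constant). The main obstacle is establishing the $\epsilon$-uniform control of $\EE[S_i]$ and of the increment variances: one must be careful that the logarithmic divergence in $K_\epsilon$ is genuinely cut off at scale $\epsilon$ for \emph{both} the variance and the covariance so that differences stay $O(1)$ on $\epsilon$-boxes; this is where the specific form \eqref{eq:LGF} and the boundedness of $g_\epsilon$ are essential, and it is cleanest to phrase via the rescaled field $\tilde X(y) := X_\epsilon(r_i + \epsilon y)$ on $[-\tfrac12,\tfrac12]^d$, whose covariance is $-\log(|y - y'| + 1) + (\text{bounded})$, manifestly $\epsilon$-independent.
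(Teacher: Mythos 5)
Your overall strategy --- uniform bound on the increment variances over $\epsilon$-boxes, Gaussian concentration (Borell--TIS) for each $S_i$ together with a uniform bound on $\EE[S_i]$, then a union bound over the $\asymp\epsilon^{-d}$ boxes --- is essentially the argument in the paper (which additionally decouples the boxes via Slepian's lemma before concentrating, a convenience rather than a necessity for the union bound). However, your final step contains a genuine gap. You arrive at $\EE[e^{c\sup_i S_i}]\le C\epsilon^{-d}$ and assert that this gives \eqref{eq:lem2} for every $\kappa>0$ because ``for smaller $\kappa$ the bound is trivially weaker.'' This is backwards: for $0<\kappa<d$ and $\epsilon\in(0,1]$ one has $\epsilon^{-\kappa}\le\epsilon^{-d}$, so the claimed estimate $\le C_{c,\kappa}\epsilon^{-\kappa}$ is \emph{stronger} than what you proved. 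The distinction is not cosmetic: the lemma is applied later (in Lemma \ref{lem:r2l} and in the low-temperature lower bound) precisely with $\kappa$ arbitrarily small, so that the discretisation error contributes nothing to the exponent $\tilde{\eta}_q$; a residual factor $\epsilon^{-d}$ would destroy the matching of exponents in both places.

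The missing ingredient is not the ``sharper count of thick-point boxes'' you gesture at, but a one-line recentering of the union bound you already have. From $\PP(S_i\ge\lambda)\le 2e^{-(\lambda-C')^2/(2C)}$ and $(u+a)^2\ge u^2+a^2$ with $a:=\sqrt{2Cd\log(1/\epsilon)}$,
\begin{align*}
\PP\Bigl(\sup_i S_i \ge C'+a+u\Bigr)\;\le\; 2\,\epsilon^{-d}\,e^{-\frac{(u+a)^2}{2C}}\;\le\; 2\,e^{-\frac{u^2}{2C}},\qquad u\ge 0,
\end{align*}
so integrating $ce^{c\lambda}$ against this tail gives $\EE[e^{c\sup_i S_i}]\le C''e^{c a}=C''e^{c\sqrt{2Cd\log(1/\epsilon)}}=\epsilon^{-O(1/\sqrt{\log(1/\epsilon)})}=o(\epsilon^{-\kappa})$ for every $\kappa>0$, which is exactly how the paper concludes. (A minor additional remark: your stated modulus $\EE[(X_\epsilon(v)-X_\epsilon(w))^2]^{1/2}\le C(1+|\log(|v-w|/\epsilon)|^{1/2})$ is not a usable chaining modulus, since it does not vanish as $v\to w$; the correct bound coming from the kernel is $2\log(1+|v-w|/\epsilon)$ plus the $g_\epsilon$ contribution, which after rescaling by $\epsilon$ is an $\epsilon$-independent, Lipschitz-type modulus --- this is what makes $\EE[S_i]$ and $\sigma_i^2$ uniformly bounded.)
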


\begin{proof}
We reproduce the argument in the proof of \cite[Lemma 4.3]{RV2010} here for self-containedness. Since
\begin{align*}
& \EE\left[ \exp \left( c \sup_{i} \sup_{v \in B_i} \left|X_\epsilon(v) - X_\epsilon(r_i)\right|\right) \right]\\
& \qquad \le \EE\left[ \exp \left( c \sup_{i} \sup_{v \in B_i} \left(X_\epsilon(v) - X_\epsilon(r_i)\right)\right) \right]
+ \EE\left[ \exp \left( c \sup_{i} \sup_{v \in B_i} \left(- X_\epsilon(v) + X_\epsilon(r_i)\right)\right) \right],
\end{align*}

\noindent it suffices to consider $\sup_i \sup_{v \in B_i} (X_\epsilon(v) - X_\epsilon(r_i))$ without taking the absolute value as in \eqref{eq:lem2}. For simplicity assume that $\epsilon^{-1} \in \NN$. Let us write
\begin{align*}
W^i_\epsilon(v) = X_\epsilon(v) - X_\epsilon(r_i), \qquad v \in B_i.
\end{align*}

\noindent Note that for $v, v' \in B_i$, we have
\begin{align*}
\left|\EE[W^i_\epsilon(v) W^i_\epsilon(v')] \right|
&\le \frac{1}{2} \left(\EE[W_\epsilon^i(v)^2] + \EE[W_\epsilon^i(v')^2] \right)\\
& \le \log \left( 1 + \frac{|v - r_i|}{\epsilon}\right) + \log \left( 1 + \frac{|v' - r_i|}{\epsilon}\right) + 3C_g \le K
\end{align*}

\noindent for some constant $K>0$ independent of $\epsilon$. We define a centred Gaussian random variable $Y \sim N(0, K)$ that is independent of everything, and also new independent Gaussian processes $(\overline{W}_\epsilon^i(v))_{v \in B_i}$ having the same law as $(W_\epsilon^i(v) + Y)_{v \in B_i}$ for each $i$. Then by Slepian's Lemma (Corollary \ref{cor:Slepian}) we have

\begin{align*}
\EE \left[ \exp \left(c \sup_{i, v} W_\epsilon^i(v) \right) \right]
& \le e^{-\frac{c^2}{2} K} \EE \left[ \exp \left(c \sup_{i, v} \overline{W}_\epsilon^i(v) \right) \right].
\end{align*}

\noindent Let $\widetilde{W}_\epsilon^i : = \sup_{v \in B_i} \overline{W}_\epsilon^i(v) $. By Gaussian concentration (see e.g. \cite[Theorem 2.5.8]{GP}, we have, for each $i=1, \dots, \epsilon^{-d}$,
\begin{align*}
\PP \left( \left| \widetilde{W}_\epsilon^i - \EE\left[ \widetilde{W}_\epsilon^i\right] \right| > u \right) \le 2e^{-\frac{u^2}{2\sigma_i^2}}
\end{align*}

\noindent where $\sigma_i^2 := \sup_{v \in B_i} \EE\left[\overline{W}_\epsilon^i(v)^2\right]$. It is not hard to show that both $\sigma_i^2$ and $\EE\left[ \widetilde{W}_\epsilon^i \right]$ can be upper bounded uniformly in $\epsilon$ and $i$, and therefore we have for some $C > 0$
\begin{align*}
\PP\left( \widetilde{W}_\epsilon^i > u \right) \le C e^{-\frac{u^2}{C}}.
\end{align*}

\noindent Applying union bound, 
\begin{align*}
\PP\left( \max_i \widetilde{W}_\epsilon^i > u + \sqrt{-C\log \epsilon^d}\right) 
& \le \epsilon^{-d} \PP\left(\widetilde{W}_\epsilon^i > u + \sqrt{-C\log \epsilon^d}\right)
\le Ce^{-\frac{u^2}{C}}
\end{align*}

\noindent and hence we obtain
\begin{align*}
\EE \left[ \exp \left(c \sup_i \widetilde{W}_\epsilon^i \right) \right] \le C' e^{C' \sqrt{-\log \epsilon^d}} = C' \epsilon^{-\frac{C' \sqrt{d}}{\sqrt{-\log \epsilon}}} = o(\epsilon^{-\kappa})
\end{align*}

\noindent for any $\kappa > 0$, as claimed. 

\end{proof}

%----------------------------------------------------------------------------------------
%	First regime
%----------------------------------------------------------------------------------------

\section{Proof of Theorem \ref{theo:main}} \label{sec:proof}
\subsection{The high temperature regime}
In the regime where $0 \le \beta^2 < \frac{2d}{2q-1}$ we would like to show that
\begin{align}\label{eq:pred1}
\tilde{\eta}_q = -\frac{\beta^2q^2}{2} + \frac{\beta^2 q}{2}.
\end{align}

\subsubsection{Lower bound for $\tilde{\eta}_q$ in the high temperature regime}

%----------------------------------------------------------------------------------------
%	First regime: lower bound construction
%----------------------------------------------------------------------------------------
Finding a lower bound for $\tilde{\eta}_q(\beta)$ is equivalent to finding an upper bound for $\EE\left[ \frac{Z_\epsilon(\beta q)}{Z_\epsilon(\beta)^q} \right]$. By Lemma \ref{lem:Girsanov} , it suffices to show that
\begin{align*}
\EE \left[\left(\int_{[-1,1]^d} \frac{e^{\beta X_\epsilon(x) - \frac{\beta^2}{2} \Var\left(X_\epsilon(x)\right)}}{(|x-u| + c\epsilon)^{\beta^2 q}}dx \right)^{-q}\right]du
\end{align*}

\noindent is bounded above uniformly in $\epsilon \in [0, 1]$ and in $u \in [-1,1]^d$. But then
\begin{align*}
& \EE \left[\left(\int_{[-1,1]^d} \frac{e^{\beta X_\epsilon(x) - \frac{\beta^2}{2} \Var\left(X_\epsilon(x)\right)}}{(|x-u| + \epsilon)^{\beta^2 q}}dx \right)^{-q}\right]du\\
& \qquad \le \left(\min_{x, u \in [-1,1]^d} (|x-u|+\epsilon)^{-\beta^2 q}\right)^{-q}
\EE \left[\left(\int_{[-1,1]^d} e^{\beta X_\epsilon(x) - \frac{\beta^2}{2} \Var\left(X_\epsilon(x)\right)}dx \right)^{-q}\right]du\\
& \qquad \le C
\end{align*}

\noindent where the last inequality follows from Lemma \ref{lem:negm}. Putting everything together, we have
\begin{align*}
\EE\left[ \frac{Z_\epsilon(\beta q)}{Z_\epsilon(\beta)^q} \right] 
&\lesssim \epsilon^{-\frac{\beta^2 q^2}{2} + \frac{\beta^2 q}{2}}, 
\qquad \liminf_{\epsilon \to 0} \frac{\log \EE\left[ \frac{Z_\epsilon(\beta q)}{Z_\epsilon(\beta)^q} \right]}{\log \epsilon} \ge -\frac{\beta^2 q^2}{2} + \frac{\beta^2 q}{2}
\end{align*}

\noindent which is a lower bound that matches \eqref{eq:pred1}.

%----------------------------------------------------------------------------------------
%	First regime: old lower bound construction based on reverse Holder inequality (hidden below)
%----------------------------------------------------------------------------------------

\hide{
\newpage
\red{\hrule}
\noindent An alternative proof is to make use of the reverse H\"older's inequality:
\begin{align*}
||fg||_1 \ge ||f||_\gamma ||g||_\delta, \qquad \gamma \in (0, 1), \quad \frac{1}{\gamma} + \frac{1}{\delta} = 1.
\end{align*}

\noindent Note that $\gamma \in (0, 1)$ implies that $\delta < 0$. We see that
\begin{align*}
& \int_{[-1,1]^d} \frac{e^{\beta X_\epsilon(x)}}{(|u-x| + \epsilon)^{\beta^2 q}}dx\\
& \qquad \ge \left(\int_{[-1,1]^d} e^{\beta \gamma X_{\epsilon}(x)}dx \right)^{1/ \gamma} \left( \int_{[-1,1]^d} (|u-x| + \epsilon)^{-\beta^2 q \delta}dx\right)^{1/ \delta}\\
& \qquad = \epsilon^{-\frac{\beta^2 \gamma}{2}} \left(\int_{[-1,1]^d} e^{\beta \gamma X_{\epsilon}(x) - \frac{\beta^2 \gamma^2}{2} \Var(X_\epsilon(x))}dx \right)^{1/ \gamma} \left( \int_{[-1,1]^d} (|u-x| + \epsilon)^{-\beta^2 q \delta}dx\right)^{1/ \delta}\\
& \qquad = \epsilon^{-\frac{\beta^2 \gamma}{2}} \left(\int_{[-1,1]^d} e^{\beta \gamma X_{\epsilon}(x) - \frac{\beta^2 \gamma^2}{2} \Var(X_\epsilon(x))}dx \right)^{1/ \gamma} \left( \frac{(u+\epsilon)^{1-\beta^2 q \delta} + (1-u+\epsilon)^{1-\beta^2 q \delta} - 2\epsilon^{{1-\beta^2 q \delta}}}{1 - \beta^2 q \delta} \right)^{1/ \delta}.
\end{align*}

Note that $1-\beta^2 q \delta > 1$, and we see that
\begin{align*}
c_{\beta, q, \delta} \le \left( \frac{(u+\epsilon)^{1-\beta^2 q \delta} + (1-u+\epsilon)^{1-\beta^2 q \delta} - 2\epsilon^{{1-\beta^2 q \delta}}}{1 - \beta^2 q \delta} \right)^{1/ \delta} \le C_{\beta, q, \delta}
\end{align*}

\noindent where the constants $c = c_{\beta, q, \delta}, C = C_{\beta, q, \delta}$ are uniform in $|u| \le 1$ and $\epsilon \to 0$. Therefore
\begin{align*}
\EE \left[\frac{Z_\epsilon(\beta q)}{Z_\epsilon(\beta)^q} \right]
& \lesssim \epsilon^{-\frac{\beta^2 q^2}{2}} \epsilon^{\frac{\beta^2 q \gamma}{2}} 
\underbrace{\EE\left[\left(\int_{[-1,1]^d} e^{\beta \gamma X_{\epsilon}(x) - \frac{\beta^2 \gamma^2}{2} \Var(X_\epsilon(x))}dx \right)^{-q/ \gamma}\right]}_{\lesssim \mathrm{const.}}\\
& \lesssim \epsilon^{- \frac{\beta^2 q^2}{2} + \frac{\beta^2 q \gamma}{2}}.
\end{align*}

Taking logarithm and dividing both sides by $\log \epsilon$, we see
\begin{align*}
\tilde{\eta}_q 
& = \lim_{\epsilon \to 0^+} \frac{\EE \left[\frac{Z_\epsilon(\beta q)}{Z_\epsilon(\beta)^q} \right]}{\log \epsilon}
\ge -\frac{\beta^2 q^2}{2} + \frac{\beta^2 q \gamma}{2}, \qquad \gamma \in (0, 1).
\end{align*}

\noindent Since $\gamma$ is arbitrary, we have obtained the lower bound
\begin{align*}
\tilde{\eta}_q \ge -\frac{\beta^2 q^2}{2} + \frac{\beta^2 q}{2}.
\end{align*}

\noindent This is desired lower bound when we are in the simple scaling regime.
}

%----------------------------------------------------------------------------------------
%	First regime: upper bound construction
%----------------------------------------------------------------------------------------

\subsubsection{Upper bound for $\tilde{\eta}_q$ in high temperature regime}
Picking $m > 0$, we have by Jensen's inequality that
\begin{align*}
& \EE \left[ \left( \int_{[-1,1]^d} \frac{e^{\beta X_\epsilon(x)-\frac{\beta^2}{2} \Var(X_\epsilon(x))}}{(|x-u|+\epsilon)^{\beta^2 q}}dx\right)^{-q}\right]
\ge \EE \left[ \left( \int_{[-1,1]^d} \frac{e^{\beta X_\epsilon(x)-\frac{\beta^2}{2} \Var(X_\epsilon(x))}}{(|x-u|+\epsilon)^{\beta^2 q}}dx\right)^{q/m}\right]^{-m}\\
\end{align*}

\noindent which is lower-bounded by a constant independent of $\epsilon$ by Lemma \ref{lem:1}(i) when
\begin{align}\label{eq:beta1p}
\beta^2 < \frac{d}{q\left(1 + \frac{1}{2m}\right) - \frac{1}{2}}.
\end{align}

\noindent Combining this with Lemma \ref{lem:Girsanov}, we obtain
\begin{align}
\EE \left[\frac{Z_\epsilon(\beta q)}{Z_\epsilon(\beta)^q} \right]
\label{eq:eta1u}
& \gtrsim  \epsilon^{-\frac{\beta^2 q^2}{2} + \frac{\beta^2 q}{2}},
\qquad  \limsup_{\epsilon \to 0} \frac{\log \EE\left[ \frac{Z_\epsilon(\beta q)}{Z_\epsilon(\beta)^q} \right]}{\log \epsilon} \le-\frac{\beta^2q^2}{2} +\frac{\beta^2 q}{2}
\end{align}

\noindent so long as \eqref{eq:beta1p} holds. Since $m > 0$ is arbitrary, we conclude that \eqref{eq:eta1u} holds for any
\begin{align*} %\label{eq:beta1}
\beta^2 < \frac{d}{q - \frac{1}{2}} = \frac{2d}{2q-1}.
\end{align*}

\subsection{Intermediate regime}
In the regime where $\frac{2d}{2q-1} \le \beta^2 < 2d,$ we would like to prove that
\begin{align}\label{eq:pred2}
\tilde{\eta}_q = \frac{(2d - \beta^2)^2}{8\beta^2} - d(q-1).
\end{align}

%----------------------------------------------------------------------------------------
%	Intermediate regime: lower bound construction
%----------------------------------------------------------------------------------------

\subsubsection{Lower bound for $\tilde{\eta}_q$ in the intermediate regime}

Recall the magical choice $c = \frac{1}{q} \left(\frac{1}{2} + \frac{d}{\beta^2}\right)$ in Section \ref{sec:2}. For the purpose of proving lower bound for $\tilde{\eta}_q$, we need to show that the remaining integral in \eqref{eq:Girsanov} is irrelevant. For this, we claim that

\begin{lem} \label{lem:r2l}
If $\frac{2d}{2q-1} \le \beta^2 < 2d$ and $c = \frac{1}{q} \left(\frac{1}{2} + \frac{d}{\beta^2}\right)$,  then 
\begin{align*}
\EE \left[\left(\frac{e^{\beta (1-c) X_{\epsilon}(u)}}{\int_{[-1,1]^d} \frac{e^{\beta X_\epsilon(x) - \frac{\beta^2}{2} \Var \left(X_\epsilon(x)\right)}}{(|x-u| + \epsilon)^{\beta^2 q c}}dx}\right)^q\right] = O(\epsilon^{-\kappa})
\end{align*}

\noindent for any $\kappa > 0$.
\end{lem}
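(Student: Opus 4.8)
The plan is to control the denominator integral
$I_\epsilon(u):=\int_{[-1,1]^d}(|x-u|+\epsilon)^{-\beta^2 q c}\,e^{\beta X_\epsilon(x)-\frac{\beta^2}{2}\Var(X_\epsilon(x))}\,dx$
from below in two complementary ways. First, since $|x-u|\le 2\sqrt d$ for $x,u\in[-1,1]^d$, the singular weight is bounded below by $(2\sqrt d+1)^{-\beta^2 q c}$, which gives the crude \emph{global} bound $I_\epsilon(u)\gtrsim M_{\beta,\epsilon}([-1,1]^d)$. Second, restricting the integral to the box $B_{i_0}$ of the partition in Lemma~\ref{lem:2} that contains $u$ — on which $|x-u|+\epsilon\asymp\epsilon$, $|B_{i_0}|\asymp\epsilon^d$, $\Var(X_\epsilon(x))=-\log\epsilon+O(1)$, and $X_\epsilon(x)\ge X_\epsilon(u)-2\widetilde\omega_\epsilon$ with $\widetilde\omega_\epsilon:=\sup_i\sup_{v\in B_i}|X_\epsilon(v)-X_\epsilon(r_i)|$ — yields the \emph{local} bound $I_\epsilon(u)\gtrsim \epsilon^{-\beta^2 qc+d+\beta^2/2}\,e^{\beta X_\epsilon(u)-2\beta\widetilde\omega_\epsilon}$. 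The whole point of the choice $c=\frac1q(\frac12+\frac{d}{\beta^2})$ is that $\beta^2qc=d+\frac{\beta^2}{2}$, so the power of $\epsilon$ disappears and $I_\epsilon(u)\gtrsim e^{\beta X_\epsilon(u)-2\beta\widetilde\omega_\epsilon}$, uniformly in $u$.

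Since the quantity in the lemma depends on $\beta$ only through $\beta^2$ (replace $X_\epsilon$ by $-X_\epsilon$), we may assume $\beta>0$; note also $1-c\ge0$ and $c>0$ throughout the intermediate regime. I would then split the expectation according to the sign of $X_\epsilon(u)$. On $\{X_\epsilon(u)\le0\}$ the numerator $e^{q\beta(1-c)X_\epsilon(u)}$ is $\le1$, and the global bound gives $\bigl(e^{\beta(1-c)X_\epsilon(u)}/I_\epsilon(u)\bigr)^q\lesssim M_{\beta,\epsilon}([-1,1]^d)^{-q}$, whose expectation is bounded uniformly in $\epsilon$ by Lemma~\ref{lem:negm} (this is where $\beta^2<2d$ enters). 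On $\{X_\epsilon(u)>0\}$ the local bound gives $\bigl(e^{\beta(1-c)X_\epsilon(u)}/I_\epsilon(u)\bigr)^q\lesssim e^{-q\beta c X_\epsilon(u)}e^{2q\beta\widetilde\omega_\epsilon}\le e^{2q\beta\widetilde\omega_\epsilon}$, since $-q\beta cX_\epsilon(u)\le0$ there; taking expectations and applying Lemma~\ref{lem:2} with constant $2q\beta$ bounds this contribution by $O(\epsilon^{-\kappa})$ for any $\kappa>0$. Summing the two pieces gives the claim, with all implied constants uniform in $u\in(-1,1)^d$.

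The structural heart of the argument is the exact cancellation of the $\epsilon$-powers in the local lower bound, which is precisely why $c$ is chosen this way; the residual $\epsilon^{-\kappa}$ comes entirely from the oscillation factor $e^{2q\beta\widetilde\omega_\epsilon}$ via Lemma~\ref{lem:2}. I do not expect a serious obstacle here — the only point requiring some care is making the local bound genuinely uniform in the position of $u$ (in particular when $u$ is close to $\partial[-1,1]^d$): one checks that $B_{i_0}\subseteq[-1,1]^d$ always has volume $\asymp\epsilon^d$, that $|x-u|+\epsilon$ is comparable to $\epsilon$ on $B_{i_0}$ with constants depending only on $d$, and that $|X_\epsilon(x)-X_\epsilon(u)|\le 2\widetilde\omega_\epsilon$ on $B_{i_0}$ so that Lemma~\ref{lem:2} applies verbatim.
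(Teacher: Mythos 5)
Your proposal is correct and follows essentially the same route as the paper: split on the sign of $X_\epsilon(u)$, handle $\{X_\epsilon(u)\le 0\}$ by bounding the kernel below by a constant and invoking the negative moments of subcritical GMC (Lemma \ref{lem:negm}), and handle $\{X_\epsilon(u)>0\}$ by a discretisation in which the choice $\beta^2qc=d+\tfrac{\beta^2}{2}$ makes the $\epsilon$-powers cancel exactly, leaving only the oscillation factor controlled by Lemma \ref{lem:2}. Your only deviation is a streamlining of the second step --- you lower-bound the denominator by its contribution from the single box containing $u$, whereas the paper keeps the full discrete sum and then discards all but the diagonal term; the two are equivalent, and yours is the cleaner packaging.
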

\begin{proof}
Consider the event $A = \{ X_\epsilon(u) > 0\}$. Based on this event we split our expectation into two parts:
\begin{align*}
& \EE \left[\left(\frac{e^{\beta (1-c) X_{\epsilon}(u)}}{\int_{[-1,1]^d} \frac{e^{\beta X_\epsilon(x) - \frac{\beta^2}{2} \Var \left(X_\epsilon(x)\right)}}{(|x-u| + \epsilon)^{\beta^2 q c}}dx}\right)^q\right]\\
& = \EE \left[\left(\frac{e^{\beta (1-c) X_{\epsilon}(u)}}{\int_{[-1,1]^d} \frac{e^{\beta X_\epsilon(x) - \frac{\beta^2}{2} \Var \left(X_\epsilon(x)\right)}}{(|x-u| + \epsilon)^{\beta^2 q c}}dx}\right)^q 1_A\right]
+ \EE \left[\left(\frac{e^{\beta (1-c) X_{\epsilon}(u)}}{\int_{[-1,1]^d} \frac{e^{\beta X_\epsilon(x) - \frac{\beta^2}{2} \Var \left(X_\epsilon(x)\right)}}{(|x-u| + \epsilon)^{\beta^2 q c}}dx}\right)^q 1_{A^c}\right].
\end{align*}

\noindent It should not be surprising that the second term after the equality above is negligible. Indeed since $c = \frac{1}{q} \left(\frac{1}{2} + \frac{d}{\beta^2}\right) \in (0, 1]$, we have
\begin{align*}
\EE \left[\left(\frac{e^{\beta (1-c) X_{\epsilon}(u)}}{\int_{[-1,1]^d} \frac{e^{\beta X_\epsilon(x) - \frac{\beta^2}{2} \Var \left(X_\epsilon(x)\right)}}{(|x-u| + \epsilon)^{\beta^2 q c}}dx}\right)^q 1_{A^c}\right]
& \le  3^{\beta^2 q^2 c}  \EE \left[\left(\int_{[-1,1]^d} e^{\beta X_\epsilon(x) - \frac{\beta^2}{2} \Var \left(X_\epsilon(x)\right)}dx\right)^{-q}\right]\\
& = O(1)
\end{align*}

\noindent again due to the existence of the negative moments of subcritical Gaussian multiplicative chaos (Lemma \ref{lem:negm}).  We are then left with upper-bounding the remaining term.\\

To proceed, we now adopt the notations in the proof of Lemma \ref{lem:2}, partitioning $[-1,1]^d$ into boxes $\{B_i\}_i$ of equal width $\epsilon^{-1}$ and denoting by $r_i$ the centre of box $B_i$. Say $u \in B_j$ for some $j$. Note that $x$ is a point in the box $B_i$, we have the simple inequality
\begin{align*}
|x-u| + \epsilon
&\le |x - r_i| + |u - r_j| + |r_i - r_j| + \epsilon\\
& \le (1+\sqrt{d}) (|r_i - r_j| + \epsilon)
\end{align*}

\noindent because $|r_i - r_j| \ge \epsilon$ while $\max(|x-r_i|, |u-r_j|) \le \epsilon\sqrt{d}$. Therefore
\begin{align*}
& \EE \left[\left(\frac{e^{\beta (1-c) X_{\epsilon}(u)}}{\int_{[-1,1]^d} \frac{e^{\beta X_\epsilon(x) - \frac{\beta^2}{2} \Var \left(X_\epsilon(x)\right)}}{(|x-u| + \epsilon)^{\beta^2 q c}}dx}\right)^q 1_A\right]\\
& \lesssim  \epsilon^{- \frac{\beta^2 q}{2}}\EE \left[\left(\frac{e^{\beta(1-c) (X_\epsilon(u) - X_\epsilon(r_j))} e^{\beta(1-c) X_{\epsilon}(r_j)}}{e^{\beta \inf_i \inf_{x \in B_i} (X_\epsilon(x) - X_\epsilon(r_i))}\sum_i \epsilon^d \frac{e^{\beta X_\epsilon(r_i)}}{(|r_i-r_j| + \epsilon)^{\beta^2 q c}}}\right)^q  1_A\right]\\
& \lesssim  \epsilon^{- \frac{\beta^2 q}{2}+\beta^2 q^2 c(1-c)}\EE \left[\left(\frac{e^{\beta(1-c) (X_\epsilon(u) - X_\epsilon(r_j))}\left(e^{\beta X_{\epsilon}(r_j)}\epsilon^{-\beta^2 q c}\right)^{1-c}}{e^{\beta \inf_i \inf_{x \in B_i} (X_\epsilon(x) - X_\epsilon(r_i))}\sum_i \epsilon^d \frac{e^{\beta X_\epsilon(r_i)}}{(|r_i-r_j| + \epsilon)^{\beta^2 q c}}}\right)^q  1_A\right]\\
& \lesssim  \epsilon^{- \frac{\beta^2 q}{2}+ \beta^2 q^2 c(1-c) -dq}\\
& \qquad \qquad \times \EE \left[\frac{e^{\beta q (1-c) (X_\epsilon(u) - X_\epsilon(r_j))}}{e^{\beta q \inf_i \inf_{x \in B_i} (X_\epsilon(x) - X_\epsilon(r_i))}} \left(e^{\beta X_{\epsilon}(r_j)}\epsilon^{-\beta^2 q c}\right)^{-qc} \left(\frac{e^{\beta X_{\epsilon}(r_j)}\epsilon^{-\beta^2 q c}}{\sum_i \frac{e^{\beta X_\epsilon(r_i)}}{(|r_i-r_j| + \epsilon)^{\beta^2 q c}}}\right)^q1_A\right]\\
& \le \epsilon^{\beta^2 q^2 \left[c - \frac{1}{q} \left( \frac{1}{2} + \frac{d}{\beta^2}\right)\right]} \EE \left[ \frac{e^{-\beta qc X_\epsilon(u)} e^{\beta q (X_\epsilon(u) - X_\epsilon(r_j))}}{e^{\beta q \inf_i \inf_{x \in B_i} (X_\epsilon(x) - X_\epsilon(r_i))}} 1_A\right]\\
& \le \EE \left[e^{2\beta q \sup_i \sup_{x \in B_i} \left|X_\epsilon(x) - X_\epsilon(r_i)\right|}\right]\\
& \lesssim \epsilon^{-\kappa}
\end{align*}

\noindent where $\kappa > 0$ is arbitrary from Lemma \ref{lem:2}, as claimed.
\end{proof}

Going back to the proof of lower bound of $\tilde{\eta}_q$, by choosing $c = \frac{1}{q} \left(\frac{1}{2} + \frac{d}{\beta^2}\right)$ and applying Lemma \ref{lem:r2l} we are able to obtain
\begin{align*}
\EE\left[ \frac{Z_\epsilon(\beta q)}{Z_\epsilon(\beta)^q} \right] 
&\lesssim \epsilon^{\frac{(2d-\beta^2)^2}{8\beta^2} - d(q-1) - \kappa}, 
\qquad \liminf_{\epsilon \to 0} \frac{\log \EE\left[ \frac{Z_\epsilon(\beta q)}{Z_\epsilon(\beta)^q} \right]}{\log \epsilon} \ge \frac{(2d-\beta^2)^2}{8\beta^2} - d(q-1) - \kappa.
\end{align*}

\noindent Since $\kappa > 0$ is arbitrary, this is a lower bound that matches \eqref{eq:pred2}.

%----------------------------------------------------------------------------------------
%	Intermediate regime: upper bound construction
%----------------------------------------------------------------------------------------

\subsubsection{Upper bound for $\tilde{\eta}_q$ in the intermediate regime}
For the upper bound we shall actually defer our choice of the parameter $c$, and start by finding a lower bound for
\begin{align*}
\EE \left[\frac{e^{\beta q(1-c) X_{\epsilon}(u)}}{\left(\int_{[-1,1]^d} \frac{e^{\beta X_\epsilon(x) - \frac{\beta^2}{2} \Var\left(X_\epsilon(x)\right)}}{(|x-u| + \epsilon)^{\beta^2 q c}}dx \right)^q}\right].
\end{align*}

\noindent We apply the reverse H\"older's inequality
\begin{align*}
||fg||_1 \ge ||f||_\gamma ||g||_\delta,
\quad \gamma < 0, \quad \delta \in (0,1), \quad \frac{1}{\gamma} + \frac{1}{\delta} = 1,
\end{align*}

\noindent and obtain
\begin{align*}
& \EE \left[\frac{e^{\beta q(1-c) X_{\epsilon}(u)}}{\left(\int_{[-1,1]^d} \frac{e^{\beta X_\epsilon(x) - \frac{\beta^2}{2} \Var\left(X_\epsilon(x)\right)}}{(|x-u| + \epsilon)^{\beta^2 q c}}dx \right)^q}\right]\\
& \qquad  \gtrsim \EE \left[e^{\beta q(1-c)\gamma X_{\epsilon}(u)}\right]^{\frac{1}{\gamma}}
\EE \left[\left(\int_{[-1,1]^d} \frac{e^{\beta X_\epsilon(x)- \frac{\beta^2}{2} \Var\left(X_\epsilon(x)\right)}}{(|x-u| + \epsilon)^{\beta^2 q c}}dx \right)^{-q\delta}\right]^{\frac{1}{\delta}}\\
& \qquad \gtrsim \epsilon^{- \frac{\beta^2 q^2 (1-c)^2 \gamma}{2}}
\EE \left[\left(\int_{[-1,1]^d} \frac{e^{\beta X_\epsilon(x) - \frac{\beta^2}{2} \Var\left(X_\epsilon(x)\right)}}{(|x-u| + \epsilon)^{\beta^2 q c}}dx \right)^{-q\delta}\right]^{\frac{1}{\delta}}\\
& \qquad \ge  \epsilon^{- \frac{\beta^2 q^2 (1-c)^2 \gamma}{2}}
\EE \left[\left(\int_{[-1,1]^d} \frac{e^{\beta X_\epsilon(x) - \frac{\beta^2}{2}\Var(X_\epsilon(x))}}{(|x-u| + \epsilon)^{\beta^2 q c}}dx \right)^{\frac{q\delta}{m}}\right]^{-\frac{m}{\delta}}
\end{align*}

\noindent where in the last line we introduced $m>0$ and applied Jensen's inequality. We shall assume that $m$ is such that $\frac{q\delta}{m} \in (0, 1)$. Combining everything we have

\begin{align} \label{eq:ub}
\EE\left[ \frac{Z_\epsilon(\beta q)}{Z_\epsilon(\beta)^q} \right]
\gtrsim \epsilon^{-\frac{\beta^2 q^2}{2} [1 - (1-c)^2 (1-\gamma)] + \frac{\beta^2 q}{2}} \EE \left[\left(\int_{[-1,1]^d} \frac{e^{\beta X_\epsilon(x) - \frac{\beta^2}{2}\Var(X_\epsilon(x))}}{(|x-u| + \epsilon)^{\beta^2 q c}}dx \right)^{\frac{q\delta}{m}}\right]^{-\frac{m}{\delta}}
\end{align}

Let us for a moment assume that the expectation
\begin{align} \label{eq:r2ire}
\EE \left[\left(\int_{[-1,1]^d} \frac{e^{\beta X_\epsilon(x) - \frac{\beta^2}{2}\Var(X_\epsilon(x))}}{(|x-u| + \epsilon)^{\beta^2 q c}}dx \right)^{\frac{q\delta}{m}}\right]
\end{align}

\noindent is irrelevant. In order to get the desired upper bound for $\tilde{\eta}_q$ we perform the following matching: introduce some $\kappa_0 > 0$ and we ask if it is possible to choose some $c \in (0, 1)$ such that
\begin{align*}
-\frac{\beta^2 q^2}{2} \left[1-(1-c)^2 (1-\gamma)\right] + \frac{\beta^2q}{2} &= \frac{(2d-\beta^2)^2}{8\beta^2} - d(q-1) + \kappa_0.
\end{align*}

\noindent This is equivalent to
\begin{align}
\notag (1-c)^2 (1-\gamma) &= \frac{(1-c)^2}{1-\delta}
= \frac{1}{q^2} \left[\left(q - \frac{1}{2}\right) - \frac{d}{\beta^2}\right]^2 + \kappa_1, \\
\label{eq:eta2_choice} 
\Rightarrow c &= 1 - \sqrt{1-\delta} \left[\frac{1}{q} \left(q - \frac{1}{2} - \frac{d}{\beta^2} + \kappa_2\right)  \right] \in (0, 1)
\end{align}

\noindent where $\kappa_1, \kappa_2 > 0$ are some constants such that $\kappa_1, \kappa_2 \to 0^+$ as $\kappa_0 \to 0^+$.

In order for this matching to work, we need to go back and check that the remaining expectation \eqref{eq:r2ire} is upper bounded for such a choice of $c$, so that when it appears in \eqref{eq:ub} it is not making additional contribution. By Lemma \ref{lem:1}(i) this requires

\begin{align*}
\beta^2 qc - d + \frac{1}{2} \left( \frac{q \delta}{m} - 1\right) \beta^2 < 0
\qquad  \Rightarrow \qquad \beta^2 \left[q\left(c + \frac{\delta}{2m}\right) - \frac{1}{2}\right] < d.
\end{align*}

\noindent Since $m > 0$ can be made arbitrarily large, it suffices to check that
\begin{align}\label{eq:beta2}
\beta^2 \left(qc - \frac{1}{2}\right) < d.
\end{align}

\noindent But
\begin{align*}
\beta^2 \left(qc - \frac{1}{2}\right)
& = \beta^2 \left[ q - \frac{1}{2} - \sqrt{1-\delta} \left(q - \frac{1}{2} - \frac{d}{\beta^2} + \kappa_2 \right)\right]\\
& = d + \underbrace{\left(1-\sqrt{1-\delta}\right) \left[ \beta^2 \left(q - \frac{1}{2}\right) - d \right] - \beta^2 \sqrt{1-\delta} \kappa_2}_{=:(*)}. 
\end{align*}

\noindent Regardless of the value of $q, \beta^2$ and $\kappa_0$ (and thus $\kappa_2$), one can always choose $\delta > 0$ close enough to zero such that $(*)$ is negative. Hence \eqref{eq:beta2} is satisfied and we obtain
\begin{align*}
\limsup_{\epsilon \to 0} \frac{\log \EE\left[ \frac{Z_\epsilon(\beta q)}{Z_\epsilon(\beta)^q} \right]}{\log \epsilon}\le \frac{(2d-\beta^2)^2}{8\beta^2} - (q-1) + \kappa_0.
\end{align*}

\noindent Since $\kappa_0 > 0$ is arbitrary, we have proved the matching upper bound for $\tilde{\eta}_q$ in the intermediate regime.

\subsection{Low temperature regime}
In the so-called freezing regime where $\beta^2 > 2d,$ we shall show that
\begin{align} \label{eq:pred3}
\tilde{\eta}_q = -d(q-1).
\end{align}

\subsubsection{Lower bound for $\tilde{\eta}_q$ in the low temperature regime}
We exploit our knowledge that the Boltzmann-Gibbs measure is localised in the freezing regime. Unlike the intermediate regime where Cameron-Martin theorem had to be employed first, here we may perform discretisation directly:

\begin{align*}
\EE\left[ \frac{Z_\epsilon(\beta q)}{Z_\epsilon(\beta)^q}\right]
& = \EE\left[ \frac{\int_{[-1,1]^d} e^{\beta q X_\epsilon(u)}du}{\left(\int_{[-1,1]^d} e^{\beta X_\epsilon(x)}dx\right)^q}\right]\\
& \le \EE\left[ e^{2\beta q \sup_{k}\sup_{v \in B_k} |X_\epsilon(v) - X_\epsilon(r_k)|}\frac{\sum_{j} \epsilon^d e^{\beta q X_\epsilon(r_j)}}{\left(\sum_i \epsilon^{d}e^{\beta X_\epsilon(r_i)}\right)^q} \right]\\
&= \epsilon^{-d(q-1)}\EE\left[ e^{2\beta q \sup_{k}\sup_{v \in B_k} |X_\epsilon(v) - X_\epsilon(r_k)|}\sum_j \left(\frac{e^{\beta X_\epsilon(r_j)}}{\sum_i e^{\beta X_\epsilon(r_i)}}\right)^q \right]\\
&\le \epsilon^{-d(q-1)}\EE\left[e^{2\beta q \sup_{|u-v| \le \epsilon} (X_\epsilon(u) - X_\epsilon(v))}\right]\\
& \lesssim \epsilon^{-d(q-1) - \kappa}
\end{align*}

\noindent again by Lemma \ref{lem:2}. Therefore
\begin{align*}
\liminf_{\epsilon \to 0} \frac{\log \EE \left[ \frac{Z_\epsilon(q \beta)}{Z_\epsilon(\beta)^q} \right]}{\log \epsilon} \ge -d(q-1) - \kappa.
\end{align*}

\noindent Since $\kappa > 0$ is arbitrary, we obtain our desired lower bound for $\tilde{\eta}_q$.

\subsubsection{Upper bound for $\tilde{\eta}_q$ in the low temperature regime}
For upper bound we follow the similar approach in the intermediate regime and use the estimate.
\begin{align}
\EE\left[ \frac{Z_\epsilon(\beta q)}{Z_\epsilon(\beta)^q} \right] 
& \gtrsim \epsilon^{- \frac{\beta^2 q^2}{2}\left[1 - (1-c)^2 (1-\gamma)\right] +\frac{\beta^2q}{2}}
\int_{[-1,1]^d} \EE \left[\left(\int_{[-1,1]^d} \frac{e^{\beta X_\epsilon(x) - \frac{\beta^2}{2}\Var(X_\epsilon(x))}}{(|x-u| + \epsilon)^{\beta^2 q c}}dx \right)^{\frac{q\delta}{m}}\right]^{-\frac{m}{\delta}}du.
\end{align}

\noindent To find a suitable value of $c$ that gives us the right order, this time we need to apply Lemma \ref{lem:1}(ii) instead: assuming that $c, m, \delta$ are chosen such that
\begin{align}
\label{eq:ldef}
l:=  \beta^2 \left[q \left(c + \frac{\delta}{2m}\right)-\frac{1}{2}\right] - d > 0, 
\end{align}

\noindent we obtain, for any $\kappa > 0$,
\begin{align*}
\EE\left[ \frac{Z_\epsilon(\beta q)}{Z_\epsilon(\beta)^q} \right]
& \gtrsim \epsilon^{- \frac{\beta^2 q^2}{2}\left[1 - (1-c)^2 (1-\gamma)\right] +\frac{\beta^2q}{2} + ql + \kappa}, \\
\qquad \limsup_{\epsilon \to 0} \frac{\log \EE\left[ \frac{Z_\epsilon(\beta q)}{Z_\epsilon(\beta)^q} \right]}{\log \epsilon} & \le - \frac{\beta^2 q^2}{2}\left[1 - (1-c)^2 (1-\gamma)\right] +\frac{\beta^2q}{2} + ql + \kappa.
\end{align*}

\noindent Since $\kappa>0$ is arbitrary, we have
\begin{align*}
\limsup_{\epsilon \to 0} \frac{\log \EE\left[ \frac{Z_\epsilon(\beta q)}{Z_\epsilon(\beta)^q} \right]}{\log \epsilon} & \le - \frac{\beta^2 q^2}{2}\left[1 - (1-c)^2 (1-\gamma)\right] +\frac{\beta^2q}{2} + ql
\end{align*}

\noindent and we want to see whether it is possible to match the exponent:
\begin{align*}
- \frac{\beta^2 q^2}{2}\left[1 - (1-c)^2 (1-\gamma)\right] +\frac{\beta^2q}{2} + ql = -d(q-1),
\end{align*}

\noindent which is equivalent to
\begin{align*} 
\notag 0 
& = \frac{\beta^2 q^2}{2} \left[1 - (1-c)^2(1-\gamma) -2\left(c + \frac{\delta}{2m}\right)\right] + d\\
\notag
& = \frac{\beta^2 q^2}{2} \left[-(1-\gamma) \left(c + \frac{\gamma}{1-\gamma}\right)^2 + \frac{\gamma^2}{1-\gamma}+\gamma -\frac{\delta}{m}\right] + d\\
& = \frac{\beta^2 q^2}{2} \left[-\frac{(c-\delta)^2}{1-\delta} + \delta\left(1 -\frac{1}{m}\right)\right] + d
\end{align*}

\noindent Solving for $c$ and, say, taking the larger root, we have
\begin{align} \label{eq:ub3con}
c &= \delta + \sqrt{1-\delta} \sqrt{\frac{2d}{\beta^2 q^2} + \delta \left(1 - \frac{1}{m}\right)}, \qquad \delta \in (0, 1).
\end{align}

\noindent We still have to verify \eqref{eq:ldef} for such a choice of $c$, but then when $\delta = 1$ we have $c = 1$ and $l > 0$. Hence by continuity we can find $\delta$ smaller than but close enough to $1$ such that $l > 0$, and we conclude that
\begin{align*}
\limsup_{\epsilon \to 0} \frac{\log \EE\left[ \frac{Z_\epsilon(\beta q)}{Z_\epsilon(\beta)^q} \right]}{\log \epsilon} \le -d(q-1).
\end{align*}

\begin{appendices}
\section{Gaussian toolbox}

\begin{lem}[Kahane's convexity inequality]\label{lem:kahane}
Let  $(X_i)$ and $(Y_i)$ be two centred Gaussian vectors s.t.
\begin{align*}
\EE[X_i X_j] \le \EE[Y_i Y_j].
\end{align*}

\noindent Then for any choice of non-negative weights $(p_i)$ and  all convex  functions $F: \RR_+ \to \RR$ with at most polynomial growth at infinity, we have

\begin{align}
\EE\left[F\left(\sum_{i=1}^n p_i e^{X_i - \frac{1}{2}\EE[X_i^2]}\right)\right] \le \EE\left[F\left(\sum_{i=1}^n p_i e^{Y_i - \frac{1}{2}\EE[Y_i^2]}\right)\right].
\end{align}
\end{lem}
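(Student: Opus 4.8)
The plan is to run the classical Gaussian interpolation (Kahane's original argument). First I would reduce to the situation where $(X_i)_{i=1}^n$ and $(Y_i)_{i=1}^n$ are realised on a common probability space and are independent of one another; this costs nothing since the conclusion depends only on the two covariance matrices. I would also first prove the inequality for $F \in C^2(\RR_+)$ with $F'' \ge 0$ and $F, F', F''$ of at most polynomial growth, and recover the general convex case at the very end by approximating $F$ uniformly on compacts by smooth convex functions of polynomial growth — e.g. mollify and truncate the linear tail — and passing to the limit, the limit being justified by the polynomial growth hypothesis together with the finiteness of all exponential moments of the Gaussian vectors involved.

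For $t\in[0,1]$ set $Z_i(t):=\sqrt{t}\,X_i+\sqrt{1-t}\,Y_i$, which by independence is centred Gaussian with $\EE[Z_i(t)Z_j(t)]=t\,\EE[X_iX_j]+(1-t)\,\EE[Y_iY_j]$; write $a_i:=\EE[X_i^2]$, $b_i:=\EE[Y_i^2]$, so $\EE[Z_i(t)^2]=ta_i+(1-t)b_i$. Put $U_i(t):=p_i\,e^{Z_i(t)-\frac12\EE[Z_i(t)^2]}\ge 0$ (here $p_i\ge 0$ enters), $S(t):=\sum_{i=1}^n U_i(t)$, and $\varphi(t):=\EE[F(S(t))]$. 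Since $\varphi(1)$ and $\varphi(0)$ are exactly the two sides of the claimed inequality, it suffices to show that $\varphi$ is non-increasing on $[0,1]$.

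The heart of the matter is to differentiate $\varphi$ on $(0,1)$ and integrate by parts in the Gaussian variables. From $\frac{d}{dt}U_i(t)=U_i(t)\big(\frac{X_i}{2\sqrt t}-\frac{Y_i}{2\sqrt{1-t}}-\frac{a_i-b_i}{2}\big)$ one gets $\varphi'(t)=\EE\big[F'(S(t))\sum_i U_i(t)\big(\frac{X_i}{2\sqrt t}-\frac{Y_i}{2\sqrt{1-t}}-\frac{a_i-b_i}{2}\big)\big]$. Applying the Gaussian integration by parts identity $\EE[G_k H(G)]=\sum_j\EE[G_kG_j]\,\EE[\partial_jH(G)]$ separately in the $X$-coordinates and the $Y$-coordinates, and using $\partial_{X_j}S(t)=\sqrt{t}\,U_j(t)$, $\partial_{Y_j}S(t)=\sqrt{1-t}\,U_j(t)$ together with $\partial_{X_j}U_i(t)=\sqrt{t}\,\delta_{ij}U_i(t)$ and $\partial_{Y_j}U_i(t)=\sqrt{1-t}\,\delta_{ij}U_i(t)$, one finds that all the terms carrying $F'$ — including those produced by the deterministic drift $-\tfrac12(a_i-b_i)$ — cancel identically, leaving
\[
\varphi'(t)=\frac12\,\EE\!\left[F''(S(t))\sum_{i,j}\big(\EE[X_iX_j]-\EE[Y_iY_j]\big)\,U_i(t)U_j(t)\right].
\]
Now $F''\ge 0$, while for every realisation the double sum is $\le 0$ because each summand is the product of the nonpositive number $\EE[X_iX_j]-\EE[Y_iY_j]$ with the nonnegative quantity $U_i(t)U_j(t)$. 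Hence $\varphi'(t)\le 0$ on $(0,1)$; combined with the continuity of $\varphi$ at $t=0$ and $t=1$ (dominated convergence, again via polynomial growth of $F$ and Gaussian exponential moment bounds) this gives $\varphi(1)\le\varphi(0)$, as required.

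I expect the only genuine obstacle to be the analytic bookkeeping: justifying differentiation under the expectation and the Gaussian integration by parts requires controlling products of the exponential factors $U_i(t)$ — whose moments explode as $t\to0,1$ but stay finite at each fixed interior $t$ — against the polynomial growth of $F'$ and $F''$, and the weights $\frac{1}{2\sqrt t}$ and $\frac{1}{2\sqrt{1-t}}$ are singular at the endpoints (though harmlessly, since they disappear after the integration by parts). The reduction from a general convex $F$ of polynomial growth to the twice-differentiable case via smoothing is routine but should be spelled out, since $F$ is a priori only convex.
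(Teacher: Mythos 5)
Your interpolation argument is the classical proof of Kahane's convexity inequality and is correct: the sign bookkeeping works out, since the diagonal drift terms $-\tfrac12(a_i-b_i)$ cancel exactly against the $F'$ contributions from Gaussian integration by parts, leaving $\varphi'(t)=\tfrac12\sum_{i,j}\bigl(\EE[X_iX_j]-\EE[Y_iY_j]\bigr)\EE\bigl[U_i(t)U_j(t)F''(S(t))\bigr]\le 0$ and hence $\varphi(1)\le\varphi(0)$, which is the stated inequality. The paper itself gives no proof of this lemma --- it is quoted in the appendix as a standard tool (see Kahane's original paper or the Rhodes--Vargas review) --- so there is no alternative argument to compare against; your write-up, including the reduction to smooth convex $F$ and the domination estimates needed to differentiate under the expectation, is exactly the standard treatment.
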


Slepian's lemma may then be obtained as a corollary of the above result (see e.g. \cite[Corollary 6.3]{RV2010}).
\begin{cor}[Slepian's lemma]\label{cor:Slepian}
Let $(X_i)$ and $(Y_i)$ be two centred Gaussian vectors such that
\begin{itemize}
\item $\EE[X_i^2] = \EE[Y_i^2]$ for every $i$, and
\item $\EE[X_i X_j] \le \EE[Y_i Y_j]$ for every $i \ne j$.
\end{itemize}

\noindent Then
\begin{align*}
\PP \left( \sup_{1 \le i \le n} X_i < x \right) \le \PP \left( \sup_{1 \le i \le n} Y_i < x \right).
\end{align*}

\noindent In particular, for any increasing $F: \RR \to \RR_+$ we obtain
\begin{align}
\EE \left[ F\left(\sup_{1 \le i \le n} Y_i \right)\right] \le \EE \left[ F\left(\sup_{1 \le i \le n} X_i \right)\right].
\end{align}

\end{cor}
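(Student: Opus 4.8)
The plan is to obtain the distributional comparison from Kahane's convexity inequality (Lemma~\ref{lem:kahane}) via the standard log-sum-exp device, and then to read off the expectation bound for increasing $F$ as a soft consequence of stochastic domination. Throughout, fix $x \in \RR$ and write $\sigma_i^2 := \EE[X_i^2] = \EE[Y_i^2]$.

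First I would apply Lemma~\ref{lem:kahane} to the rescaled Gaussian vectors $(\lambda X_i)_{1 \le i \le n}$ and $(\lambda Y_i)_{1 \le i \le n}$ for a parameter $\lambda > 0$. These satisfy $\EE[(\lambda X_i)(\lambda X_j)] = \lambda^2 \EE[X_i X_j] \le \lambda^2 \EE[Y_i Y_j] = \EE[(\lambda Y_i)(\lambda Y_j)]$ for all $i, j$ (the diagonal terms being equal by hypothesis, the off-diagonal ones by the assumed inequality), so the convexity inequality applies with the non-negative weights $p_i := e^{\lambda^2 \sigma_i^2 / 2}$ and any convex function of at most polynomial growth. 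Since $p_i e^{\lambda X_i - \frac12 \EE[(\lambda X_i)^2]} = e^{\lambda X_i}$ and likewise for $Y$ — here it is crucial that the \emph{same} weights $p_i$ work, which uses $\EE[X_i^2] = \EE[Y_i^2]$ — writing $N_\lambda := \sum_i e^{\lambda X_i}$, $M_\lambda := \sum_i e^{\lambda Y_i}$ and taking the convex bounded function $G(t) := (e^{\lambda x} - t)_+$ gives
\begin{align*}
\EE\left[(e^{\lambda x} - N_\lambda)_+\right] \le \EE\left[(e^{\lambda x} - M_\lambda)_+\right].
\end{align*}

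The next step is a squeeze. For $\delta > 0$ and $\lambda$ large enough that $n e^{-\lambda \delta} \le 1$, on the event $\{\sup_i X_i < x - \delta\}$ one has $N_\lambda \le n e^{\lambda(x - \delta)} \le e^{\lambda x}$, so the left-hand side is at least $e^{\lambda x}\big(1 - n e^{-\lambda \delta}\big)\, \PP(\sup_i X_i < x - \delta)$; on the other hand $M_\lambda \ge e^{\lambda \sup_i Y_i}$ forces $(e^{\lambda x} - M_\lambda)_+ \le e^{\lambda x}\, 1_{\{\sup_i Y_i < x\}}$, so the right-hand side is at most $e^{\lambda x}\, \PP(\sup_i Y_i < x)$. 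Dividing by $e^{\lambda x}$, letting $\lambda \to \infty$ with $\delta$ fixed, and then $\delta \downarrow 0$ (using $\{\sup_i X_i < x - \delta\} \uparrow \{\sup_i X_i < x\}$) yields $\PP(\sup_i X_i < x) \le \PP(\sup_i Y_i < x)$, which is the first assertion. The ``in particular'' statement is then immediate: the tail bound says that $\sup_i X_i$ stochastically dominates $\sup_i Y_i$, so for increasing $F : \RR \to \RR_+$ one has $\EE[F(\sup_i Y_i)] = \int_0^\infty \PP(F(\sup_i Y_i) > t)\,dt \le \int_0^\infty \PP(F(\sup_i X_i) > t)\,dt = \EE[F(\sup_i X_i)]$, each superlevel set $\{F > t\}$ being an up-set in $\RR$. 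The only point requiring any care is bookkeeping the $\log n$ loss in the log-sum-exp comparison and insisting on a convex (rather than merely monotone) test function $G$; everything else is routine.
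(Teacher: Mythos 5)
Your argument is correct and is exactly the route the paper itself gestures at: the paper offers no proof of this corollary beyond the remark that Slepian's lemma follows from Kahane's convexity inequality (citing \cite[Corollary 6.3]{RV2010}), and your log-sum-exp derivation --- rescaling by $\lambda$, absorbing the variances into the weights $p_i = e^{\lambda^2\sigma_i^2/2}$ (which is where the equal-variance hypothesis enters), testing against the convex function $(e^{\lambda x}-t)_+$, and sending $\lambda\to\infty$ with the $\delta$-squeeze to kill the $\log n$ loss --- is the standard way of making that deduction precise. The passage from the distributional comparison to the expectation bound via stochastic domination and the layer-cake formula is likewise fine.
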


\end{appendices}

\end{document}